\newtheorem{theorem}{Theorem}
\theoremstyle{plain}
\newtheorem{corollary}{Corollary}
\newtheorem{definition}{Definition}
\newtheorem{lemma}{Lemma}
\newtheorem{proposition}{Proposition}
\newtheorem{remark}{Remark}
\numberwithin{equation}{section}
\begin{document}
\title{On a remarkable formula of Jerison and Lee in CR geometry}
\author{Xiaodong Wang}
\address{Department of Mathematics, Michigan State University, East Lansing, MI 48824}
\email{xwang@math.msu.edu}

\begin{abstract}
We discuss a remarkable formula discovered by Jerison and Lee to classify
constant scalar curvature pseudohermitian structures on the sphere. We show
that the formula is valid in the wider context of Einstein pseudohermitian
manifolds. As an application we prove a uniqueness result that generalizes the
theorem of Jerison and Lee.

\end{abstract}
\maketitle

\section{\bigskip Introduction}

Recall that a Riemannian manifold $\left(  \Sigma^{n},g\right)  $ is called
Einstein if its Ricci curvature is constant, i.e. $Ric=cg$ for some constant
$c$. In this case the scalar curvature $R$ is then obviously a constant. In
general consider the trace-less Ricci tensor $T=Ric-\frac{R}{n}g$. By the 2nd
Bianchi identity we have%
\[
divT=\left(  \frac{1}{2}-\frac{1}{n}\right)  dR.
\]
As a corollary, we have the well-known fact that $\left(  \Sigma^{n},g\right)
$ with $n\geq3$ is Einstein iff $T=0$.

Given a closed Riemannian manifold $\left(  \Sigma^{n},g\right)  $ the famous
Yamabe problem seeks to conformally deform $g$ to get a new metric
$\widetilde{g}$ of constant scalar curvature. If we write $\widetilde
{g}=u^{4/\left(  n-2\right)  }g$, where $u$ is a positive smooth function,
then the scalar curvatures are related by the following equation%
\begin{equation}
-\frac{4\left(  n-1\right)  }{n-2}\Delta_{g}u+Ru=\widetilde{R}u^{\left(
n+2\right)  /\left(  n-2\right)  }. \label{confR}%
\end{equation}
The Yamabe problem was solved by Yamabe \cite{Y}, Trudinger \cite{T}, Aubin
\cite{A} and Schoen \cite{S} by showing that there is always a minimizer
$\overline{u}$ for the following variational problem%
\[
Y\left(  \Sigma,\left[  g\right]  \right)  :=\inf_{\substack{u\in C^{\infty
}\left(  \Sigma\right)  \\u>0}}\frac{\int_{\Sigma}\left(  \left\vert \nabla
u\right\vert ^{2}+Ru^{2}\right)  dv_{g}}{\left(  \int_{\Sigma}u^{2n/\left(
n-2\right)  }dv_{g}\right)  ^{\left(  n-2\right)  /n}}%
\]
as the metric $\overline{g}=\overline{u}^{4/\left(  n-2\right)  }g$ then has
constant scalar curvature. When $Y\left(  \Sigma,\left[  g\right]  \right)
\leq0$, we also have uniqueness: there is only one constant scalar curvature
metric up to scaling in the conformal class $\left[  g\right]  $. When
$Y\left(  \Sigma,\left[  g\right]  \right)  >0$, uniqueness in general fails.
But when there is an Einstein metric in the conformal class, we have the
following beautiful and important theorem due to Obata \cite{O2}.

\begin{theorem}
Suppose $\left(  \Sigma^{n},\widetilde{g}\right)  $ is a closed Einstein
manifold and $g=\phi\overline{g}$ is a conformal metric with constant scalar
curvature, where $\phi$ is a positive smooth function. Then

\begin{itemize}
\item $g$ is Einstein as well;

\item furthermore $\phi$ must be constant unless $\left(  \Sigma
^{n},\widetilde{g}\right)  $ is isometric to the standard sphere $\left(
\mathbb{S}^{n},g_{c}\right)  $ up to a scaling and $\phi$ corresponds to the
following function on $\mathbb{S}^{n}$%
\[
\phi\left(  x\right)  =c\left(  \cosh t+\sinh tx\cdot a\right)  ^{-2}%
\]
for some $c>0,t\geq0$ and $a\in\mathbb{S}^{n}$.
\end{itemize}
\end{theorem}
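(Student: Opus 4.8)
The plan is to proceed in three stages: (i) show that $g$ is itself Einstein by a single Bochner-type integration by parts; (ii) convert the relation between the trace-free Ricci tensor of $g$ and the conformal factor into Obata's second-order equation for a power of $\phi$; (iii) invoke Obata's sphere rigidity theorem and integrate the resulting equation explicitly. Throughout, quantities attached to the Einstein metric carry a tilde, and $\lambda$ is the constant with $Ric_{\widetilde g}=\lambda\widetilde g$. For the setup, put $v=\phi^{1/2}$, a positive smooth function, so that $g=v^{2}\widetilde g$. Recall the conformal transformation law for the trace-free Ricci tensor: under $\bar g=e^{2\sigma}g$ the pure-trace terms cancel and $(Ric_{\bar g})_{0}=(Ric_{g})_{0}-(n-2)(\nabla_{g}^{2}\sigma-d\sigma\otimes d\sigma)_{0}$, where $(\cdot)_{0}$ is the trace-free part (the same whether taken with respect to $\bar g$ or to $g$). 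Taking $\bar g=\widetilde g$, so that $(Ric_{\widetilde g})_{0}=0$, and using $\nabla_{g}^{2}v=-v(\nabla_{g}^{2}\sigma-d\sigma\otimes d\sigma)$ for $v=e^{-\sigma}$, this becomes the key relation
\[
(Ric_{g})_{0}=-\frac{n-2}{v}(\nabla_{g}^{2}v)_{0}.
\]

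Now for stage (i), the heart of the argument. Write $T=(Ric_{g})_{0}$, the trace-free Ricci of $g$; since $R_{g}$ is constant, the contracted second Bianchi identity recalled in the introduction gives $div_{g}T=0$. Let $Z$ be the vector field metrically dual (via $g$) to the $1$-form $T(\nabla_{g}v,\cdot)$; a direct computation then gives
\[
div_{g}Z=\langle div_{g}T,dv\rangle_{g}+\langle T,\nabla_{g}^{2}v\rangle_{g}=\langle T,(\nabla_{g}^{2}v)_{0}\rangle_{g}=-\frac{v}{n-2}\,|T|_{g}^{2},
\]
the second equality because $T$ is trace-free and the last by the key relation. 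Integrating over the closed manifold $\Sigma$ yields $0=\int_{\Sigma}div_{g}Z\,dv_{g}=-\tfrac{1}{n-2}\int_{\Sigma}v\,|T|_{g}^{2}\,dv_{g}$, and since $v>0$ this forces $T\equiv0$, i.e. $g$ is Einstein. This establishes the first bullet. I expect this step to be the one genuinely essential observation — namely that $T$ pairs with the Hessian of $\phi^{1/2}$ with no auxiliary weight, so the resulting integral is sign-definite; everything else is conformal bookkeeping together with the citation in stage (iii).

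For stage (ii), run the key relation in the other direction: applying the same transformation law with $\widetilde g$ as the background metric and $w=\phi^{-1/2}$ gives $(Ric_{g})_{0}=\frac{n-2}{w}(\widetilde\nabla^{2}w)_{0}$, so $T=0$ now reads $\widetilde\nabla^{2}w=\frac{\widetilde\Delta w}{n}\widetilde g$, where $\widetilde\Delta$ is the trace of $\widetilde\nabla^{2}$. Taking the $\widetilde g$-divergence and using $div_{\widetilde g}(\widetilde\nabla^{2}w)=d\widetilde\Delta w+Ric_{\widetilde g}(\widetilde\nabla w,\cdot)$ gives $\tfrac{n-1}{n}\,d\widetilde\Delta w=-\lambda\,dw$, so $\widetilde\Delta w=-\tfrac{n\lambda}{n-1}w+\text{const}$; feeding this back, $\widetilde\nabla^{2}\widehat w=-k\,\widehat w\,\widetilde g$ for $\widehat w:=w+\text{const}$ and $k:=\lambda/(n-1)$. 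If $\lambda<0$ the maximum principle forces $\widehat w\equiv0$, and if $\lambda=0$ then $\widetilde\Delta w$ is a constant, which must vanish, so $w$ is harmonic; in either case $w$, and hence $\phi$, is constant.

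Finally, stage (iii): assume $\lambda>0$. If $\widehat w$ is constant then $\phi$ is constant. Otherwise $\widehat w$ is a nonconstant solution of Obata's equation $\widetilde\nabla^{2}\widehat w=-k\widehat w\,\widetilde g$ with $k>0$, so by Obata's classical rigidity theorem $(\Sigma^{n},\widetilde g)$ is, up to a scaling, the unit round sphere $(\mathbb{S}^{n},g_{c})$ with $k=1$; and the nonconstant solutions of $\widetilde\nabla^{2}\widehat w=-\widehat w\,g_{c}$ are exactly the restrictions to $\mathbb{S}^{n}$ of linear functions on $\mathbb{R}^{n+1}$. Thus $w=b+x\cdot a$ for some $a\in\mathbb{R}^{n+1}$ and a constant $b>|a|$ (the inequality because $w>0$ everywhere). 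Writing $b+x\cdot a=\sqrt{b^{2}-|a|^{2}}\,(\cosh t+\sinh t\;x\cdot a_{0})$ with $a_{0}=a/|a|\in\mathbb{S}^{n}$ and $t\ge0$ given by $\cosh t=b/\sqrt{b^{2}-|a|^{2}}$, we conclude $\phi=w^{-2}=c(\cosh t+\sinh t\;x\cdot a_{0})^{-2}$ with $c=(b^{2}-|a|^{2})^{-1}>0$, which is precisely the asserted family. The only nontrivial external ingredient is Obata's sphere theorem invoked here; everything else is a routine computation.
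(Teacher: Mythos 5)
Your proposal is correct and follows essentially the same route as the paper: your stage (i) is exactly the divergence identity sketched in the introduction (pairing the trace-free Ricci $T$ of the constant-scalar-curvature metric with the trace-free Hessian of a power of the conformal factor, so that $div_g\bigl(T(\nabla_g v,\cdot)\bigr)=-\tfrac{v}{n-2}\lvert T\rvert_g^{2}$ and integration forces $T\equiv 0$), while stages (ii)--(iii) simply supply the standard completion via the Obata equation $\widetilde\nabla^{2}\widehat w=-k\,\widehat w\,\widetilde g$ and Obata's rigidity theorem, which the paper attributes to Obata without reproducing. The only caveat worth recording is that, like the paper's own sketch, your argument divides by $n-2$ and hence covers $n\ge 3$, the case $n=2$ being classical.
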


\bigskip When $n=2$ this is classic. Obata's proof for $n\geq3$ is very
elegant and is based on the following formula
\[
\widetilde{T}=T+\left(  n-2\right)  \phi^{-1}\left(  D^{2}\phi-\frac
{\Delta\phi}{n}g\right)  .
\]
Since $\widetilde{g}$ is Einstein, we have $\widetilde{T}=0$. Thus $-\left(
n-2\right)  \phi^{-1}\left(  D^{2}\phi-\frac{\Delta\phi}{n}g\right)  =T$.
Pairing with $T$ yields and using the fact the $g$ has constant scalar
curvature we obtain%
\[
-\left(  n-2\right)  div\left(  T\left(  \nabla\phi,\cdot\right)  \right)
=\phi\left\vert T\right\vert ^{2}.
\]

As a corollary we have the complete classification of positive solutions of a
nonlinear PDE (stated only for $n\geq3$ for brevity).

\begin{corollary}
On $\left(  \mathbb{S}^{n},g_{c}\right)  $ all positive solutions of the
equation
\[
-\frac{4}{n\left(  n-2\right)  }\Delta u+u=u^{\left(  n+2\right)  /\left(
n-2\right)  }%
\]
are of the form
\[
u\left(  x\right)  =\left(  \cosh t+\left(  \sinh t\right)  x\cdot a\right)
^{-\left(  n-2\right)  /2}%
\]
for some $t\geq0$ and $a\in\mathbb{S}^{n}$.
\end{corollary}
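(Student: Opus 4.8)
The plan is to recognize a positive solution of the displayed equation as the conformal factor of a constant scalar curvature metric on the round sphere and then to invoke Theorem 1 (Obata). First recall that $\left(\mathbb{S}^{n},g_{c}\right)$ is Einstein with scalar curvature $R=n\left(n-1\right)$; multiplying the equation through by $n\left(n-1\right)$ turns it into
\[
-\frac{4\left(n-1\right)}{n-2}\Delta_{g_{c}}u+n\left(n-1\right)u=n\left(n-1\right)u^{\left(n+2\right)/\left(n-2\right)},
\]
which is exactly the conformal transformation law (\ref{confR}) with base metric $g_{c}$ and new metric $h:=u^{4/\left(n-2\right)}g_{c}$, asserting that $h$ has scalar curvature identically $n\left(n-1\right)$. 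By elliptic regularity any positive solution $u$ is smooth, so this reformulation and the application of Theorem 1 below are legitimate.

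Thus $h=\phi g_{c}$ with $\phi=u^{4/\left(n-2\right)}>0$ smooth has constant scalar curvature, while $g_{c}$ is Einstein and equals the standard sphere. Applying Theorem 1 with $g_{c}$ in the role of the Einstein metric and $h$ in the role of the constant scalar curvature metric, we are in its exceptional case and obtain
\[
\phi\left(x\right)=c\left(\cosh t+\sinh t\,x\cdot a\right)^{-2}
\]
for some $c>0$, $t\geq0$, $a\in\mathbb{S}^{n}$, i.e. $u\left(x\right)=c^{\left(n-2\right)/4}\left(\cosh t+\sinh t\,x\cdot a\right)^{-\left(n-2\right)/2}$.

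It remains to show $c=1$. Here it is convenient to note that $w:=\left(\cosh t+\sinh t\,x\cdot a\right)^{-\left(n-2\right)/2}$ is itself a solution: the metric $w^{4/\left(n-2\right)}g_{c}$ is the pullback of $g_{c}$ under a conformal diffeomorphism of $\mathbb{S}^{n}$, hence isometric to $g_{c}$ and of scalar curvature $n\left(n-1\right)$, so $w$ satisfies the equation by (\ref{confR}). Since $h=u^{4/\left(n-2\right)}g_{c}=c\,w^{4/\left(n-2\right)}g_{c}$ and scaling a metric by the constant $c$ multiplies the scalar curvature by $c^{-1}$, we get $n\left(n-1\right)=R_{h}=c^{-1}n\left(n-1\right)$, hence $c=1$; equivalently, substituting $u=c^{\left(n-2\right)/4}w$ into the equation gives $-\frac{4}{n\left(n-2\right)}\Delta u+u=c^{-1}u^{\left(n+2\right)/\left(n-2\right)}$. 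This produces exactly the asserted family (the constant solution $u\equiv1$ being the case $t=0$), and the computation with $w$ also shows conversely that every such function solves the equation. There is no genuine obstacle in the argument: all the analytic content sits in Theorem 1, and the only care needed is the bookkeeping of normalizations — identifying $R_{g_{c}}=n\left(n-1\right)$ so that the PDE is the critical Yamabe equation with $\widetilde{R}=R$, and then tracking the constant $c$ through the substitution $\phi=u^{4/\left(n-2\right)}$.
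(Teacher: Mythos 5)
Your proof is correct and follows exactly the route the paper intends: the corollary is stated there as an immediate consequence of Obata's theorem, obtained by reading the PDE as the conformal scalar curvature equation (\ref{confR}) with $R=\widetilde{R}=n(n-1)$ on $(\mathbb{S}^n,g_c)$. Your extra bookkeeping — checking that $w=(\cosh t+(\sinh t)\,x\cdot a)^{-(n-2)/2}$ itself solves the equation and then forcing $c=1$ by comparing scalar curvatures under constant rescaling — is precisely the detail the paper leaves implicit, and it is carried out correctly.
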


Equivalently one can through the stereographic projection consider the
following equation on $\mathbb{R}^{n}$
\[
-\Delta v=v^{\left(  n+2\right)  /\left(  n-2\right)  },v>0.
\]
This equation has been studied intensively from the PDE perspectively via the
method of moving planes or moving spheres, cf. \cite{GNN, CGS} and the more
recent \cite{LZ}.

In this paper we consider the analogues of these uniqueness results in CR
geometry. Let $\left(  M,\theta\right)  $ be a pseudohermitian manifold of
dimension $2m+1$ and $T$ the Reeb vector field. We always work with a local
unitary frame $\{T_{\alpha}:\alpha=1,\cdots,m\}$ for $T^{1,0}\left(  M\right)
$ and its dual frame $\left\{  \theta^{\alpha}\right\}  $. Thus%
\[
d\theta=\sqrt{-1}\sum_{\alpha}\theta^{\alpha}\wedge\overline{\theta^{\alpha}%
}.
\]
We will often denote $T$ by $T_{0}$. Let $B_{\alpha\overline{\beta}}%
=R_{\alpha\overline{\beta}}-\frac{R}{m}\delta_{\alpha\beta}$ be the trace-less
pseudohermitian Ricci tensor, where $R$ is the pseudohermitian scalar
curvature. We say that $\theta$ is pseudo-Einstein if $R_{\alpha
\overline{\beta}}=\frac{R}{m}\delta_{\alpha\beta}$ or $B_{\alpha
\overline{\beta}}=0$. This is always true when $m=1$. Pseudo-Einstein
manifolds were first introduced by Lee \cite{Lee}. By the Bianchi identity in
CR geometry we have%
\[
B_{\alpha\overline{\beta},\beta}=\left(  1-\frac{1}{m}\right)  R_{\alpha
}-\sqrt{-1}\left(  m-1\right)  A_{\alpha\beta,\overline{\beta}}%
\]
If $\theta$ is pseudo-Einstein and $m\geq2$ then%
\[
R_{\alpha}=\sqrt{-1}\left(  m-1\right)  A_{\alpha\beta,\overline{\beta}}.
\]
Therefore a pseudo-Einstein $\theta$ does not necessarily have constant scalar
curvature due to the presence of the torsion $A$. If the torsion $A$ vanishes,
then a pseudo-Einstein $\left(  M,\theta\right)  $ of dimension $2m+1\geq5$ is
of constant scalar curvature. Slightly more general, we have

\begin{proposition}
\label{bi}Suppose $\left(  M^{2m+1},\theta\right)  $ with $m\geq2$ is
pseudo-Einstein and its torsion has zero divergence. Then its scalar curvature
is constant.
\end{proposition}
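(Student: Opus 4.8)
The key identity to start from is the CR Bianchi identity already recalled in the text: for a pseudo-Einstein manifold with $m\geq 2$, the divergence of the traceless Ricci tensor vanishes (since $B_{\alpha\bar\beta}=0$), and the contracted Bianchi identity gives
\[
R_\alpha = \sqrt{-1}(m-1)A_{\alpha\beta,\bar\beta}.
\]
So under the hypothesis that the torsion has zero divergence, $A_{\alpha\beta,\bar\beta}=0$, which already forces $R_\alpha = 0$, i.e. $\nabla^{1,0}R = 0$. Since $R$ is real, taking conjugates gives $R_{\bar\alpha}=0$ as well, so $dR$ has no component along the horizontal distribution $H = T^{1,0}\oplus T^{0,1}$. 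What remains is to show that the Reeb derivative $R_0 = T(R)$ also vanishes.

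The plan is to extract this from a commutation relation. One differentiates $R_\alpha = 0$ again and uses the commutation formulas for covariant derivatives in pseudohermitian geometry — specifically the identity relating $R_{\alpha,0}$ (equivalently $R_{0,\alpha}$ up to curvature/torsion terms) to derivatives of the torsion. Schematically, commuting a horizontal derivative past the $T$-direction on a scalar produces torsion terms: $R_{0\alpha} - R_{\alpha 0} = A_{\alpha\beta}R_{\bar\beta} + \cdots$, and the right-hand side is already known to vanish because $R_{\bar\beta}=0$. Hence $R_{0\alpha} = R_{\alpha 0} = (R_0)_{,\alpha} = \partial_\alpha(R_0)$. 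On the other hand, differentiating the relation $R_\alpha=\sqrt{-1}(m-1)A_{\alpha\beta,\bar\beta}$ in the Reeb direction, together with the divergence-free condition on $A$ and the Bianchi-type identities for the torsion (the standard $A_{\alpha\beta,\gamma}$ symmetry and the formula for $A_{\alpha\beta,0}$), should show that $R_0$ is, up to a constant, itself annihilated by $\partial_\alpha$ — i.e. $R_0$ is CR-pluriharmonic or outright constant.

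A cleaner route, which I would try first, is integral: since $R_\alpha = 0$ everywhere, the function $R$ is constant along the leaves of the horizontal distribution in an integrated sense. Concretely, consider $\int_M |\nabla_b R|^2\,\theta\wedge(d\theta)^m$ where $\nabla_b$ is the horizontal gradient — this is zero. To control the missing $T$-direction, pair the equation with $R_0$ and integrate by parts, using that $\int_M (\Delta_b R)\, \theta\wedge(d\theta)^m = 0$ and the sub-Laplacian comparison $\Delta_b R = R_{\alpha\bar\alpha}+R_{\bar\alpha\alpha}$; since $R_\alpha\equiv 0$, all these vanish, and one is left with an identity forcing $\int_M |R_0|^2 = 0$ after invoking the structure equation $d(R\,\theta\wedge(d\theta)^{m-1}\wedge\cdots)$ — the point being that $dR = R_0\,\theta$ now, so $dR\wedge\theta\wedge(d\theta)^{m-1} = 0$ trivially while $dR\wedge (d\theta)^m = R_0\,\theta\wedge(d\theta)^m$ must integrate to zero by Stokes, and a second such relation pins down $R_0$ pointwise.

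The main obstacle I anticipate is precisely the $T$-direction: horizontal vanishing of $dR$ is immediate, but pseudohermitian geometry is genuinely non-Riemannian in that the contact distribution is non-integrable, so "$dR$ horizontal-free" does not by itself give $R$ constant — one must genuinely use that $dR$ lies in the span of $\theta$ and then close the argument with an integration-by-parts or a second-order commutation identity involving the torsion. Getting the torsion terms to cancel correctly (sign conventions, the precise form of $A_{\alpha\beta,0}$, and the commutator $[\nabla_\alpha,\nabla_0]$) is where the real bookkeeping lies; I would rely on Lee's structure equations \cite{Lee} for these. Once $R_0=0$ is established, combined with $R_\alpha=0$ we get $dR=0$, hence $R$ constant, completing the proof.
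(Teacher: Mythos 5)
Your first step is exactly right and matches the paper: pseudo-Einstein together with divergence-free torsion gives $R_{\alpha}=0$ from the contracted Bianchi identity, and conjugation gives $R_{\overline{\beta}}=0$. The gap is in the second half: you never actually prove $R_{0}=0$. The commutation identity you reach for, $R_{0\alpha}-R_{\alpha 0}=A_{\alpha\beta}R_{\overline{\beta}}$, only controls the \emph{horizontal derivatives of $R_{0}$}, and your stated target (``$R_{0}$ is CR-pluriharmonic or outright constant'') is not the statement you need: a nonzero constant $R_{0}$ would still leave $R$ nonconstant, and you give no argument excluding it. Your integral fallback has two further problems: the proposition carries no compactness hypothesis (the intended argument is purely local), and even granting $M$ closed, the Stokes computation you sketch, $\int_{M}dR\wedge(d\theta)^{m}=\int_{M}R_{0}\,\theta\wedge(d\theta)^{m}=0$, only shows that the \emph{mean} of $R_{0}$ vanishes; the ``second such relation'' that would pin down $R_{0}$ pointwise is precisely the missing ingredient and is never exhibited.

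The missing idea is to apply the other commutation formula --- the one for $[\nabla_{\alpha},\nabla_{\overline{\beta}}]$ acting on a scalar --- directly to $R$ itself:
\[
\sqrt{-1}\,\delta_{\alpha\overline{\beta}}R_{0}=R_{\alpha,\overline{\beta}}-R_{\overline{\beta},\alpha}.
\]
Since $R_{\alpha}\equiv 0$ and $R_{\overline{\beta}}\equiv 0$ as identities on $M$, the right-hand side vanishes, so $R_{0}=0$ pointwise, with no integration and no compactness; this is the paper's one-line proof. Equivalently: once $dR=R_{0}\,\theta$, the relation $0=d^{2}R=dR_{0}\wedge\theta+R_{0}\,d\theta$, restricted to the contact distribution on which $d\theta$ is nondegenerate, forces $R_{0}=0$. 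Either observation closes your argument; as written, the proposal stops short of it.
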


\begin{proof}
By the formula above, $R_{\alpha}=0$. By taking conjugate, we also have
$R_{\overline{\beta}}=0$. Thus
\[
\sqrt{-1}\delta_{\alpha\overline{\beta}}R_{0}=R_{\alpha,\overline{\beta}%
}-R_{\overline{\beta},\alpha}=0.
\]
It follow $R_{0}=0$ as well and hence $R$ is constant.
\end{proof}

\begin{definition}
A pseudohermitian manifold $\left(  M^{2m+1},\theta\right)  $ is called
Einstein if it is torsion-free and the pseudohermitian Ricci tensor is
constant, i.e. $R_{\alpha\overline{\beta}}=\lambda\delta_{\alpha\beta}$ for
some constant $\lambda$.
\end{definition}

When $m\geq2$, $\left(  M^{2m+1},\theta\right)  $ is Einstein iff it is
pseudo-Einstein and torsion-free by Proposition \ref{bi}

If $\widetilde{\theta}=f^{2/m}\theta$ is another pseudohermitian structure,
where $f$ is a smooth and positive function, then the pseudohermitian scalar
curvatures of $\theta$ and $\widetilde{\theta}$ are related by the following
formula%
\[
-\frac{2\left(  m+1\right)  }{m}\Delta_{b}f+R=\widetilde{R}f^{\left(
m+2\right)  /m}.
\]

The CR Yamabe problem, initiated by Jerison and Lee \cite{JL1}, seeks to
conformally deform $\theta$ to get a new pseudohermitian structure
$\widetilde{\theta}$ of constant scalar curvature. Like the Riemannian case,
for a closed strictly pseudoconvex CR manifold $M^{2m+1}$ one considers the
Yamabe functional
\[
Y\left(  M,\theta\right)  =\frac{\int_{M}Rdv_{\theta}}{\left(  \int
_{M}dv_{\theta}\right)  ^{m/\left(  m+1\right)  }},
\]
where $\theta$ is any contact form associated to the CR structure and
$dv_{\theta}=\theta\wedge\left(  d\theta\right)  ^{m}$ is the volume form.
Set
\[
Y\left(  M\right)  =\inf_{\theta}Y\left(  M,\theta\right)  .
\]
This defines a CR invariant. The CR Yamabe problem, interpreted narrowly, is
whether the infimum is achieved. As in the Riemannian case, the unit sphere
$\mathbb{S}^{2m+1}=\left\{  z\in\mathbb{C}^{m+1}:\left\vert z\right\vert
=1\right\}  $ with its canonical pseudohermitian structure $\theta_{c}=\left(
2\sqrt{-1}\overline{\partial}\left\vert z\right\vert ^{2}\right)
|_{\mathbb{S}^{2m+1}}$ plays a fundamental role. $\left(  \mathbb{S}%
^{2m+1},\theta_{c}\right)  $ is of constant pseudohermitian curvature with
$R_{\alpha\overline{\beta}}=\left(  m+1\right)  /2\delta_{\alpha\beta}$.

In their fundamental work \cite{JL1}, Jerison and Lee proved the following

\begin{enumerate}
\item $Y\left(  \mathbb{S}^{2m+1}\right)  =Y\left(  \mathbb{S}^{2m+1}%
,\theta_{c}\right)  =2\pi m\left(  m+1\right)  $, or equivalently the
following sharp Sobolev inequality holds on $\mathbb{S}^{2m+1}$%
\begin{align}
&  \int_{\mathbb{S}^{2m+1}}\left(  \frac{2\left(  m+1\right)  }{m}\left\vert
\nabla_{b}f\right\vert ^{2}+\frac{m\left(  m+1\right)  }{2}f^{2}\right)
dv_{c}\label{Sob}\\
&  \geq2\pi m\left(  m+1\right)  \left(  \int_{\mathbb{S}^{2m+1}}\left\vert
f\right\vert ^{2\left(  m+1\right)  /m}dv_{c}\right)  ^{m/\left(  m+1\right)
},\nonumber
\end{align}
where $\nabla_{b}f$ is the horizontal gradient and $dv_{c}=\theta_{c}%
\wedge\left(  d\theta_{c}\right)  ^{m}$

\item For any closed strictly pseudoconvex CR manifold $M^{2m+1}$%
\[
Y\left(  M\right)  \leq Y\left(  \mathbb{S}^{2m+1}\right)  ;
\]

\item The CR Yamabe problem has a solution if $Y\left(  M\right)  <Y\left(
\mathbb{S}^{2m+1}\right)  $.
\end{enumerate}

Moreover, they proved $Y\left(  M\right)  <Y\left(  \mathbb{S}^{2m+1}\right)
$ when $m\geq2$ and $M$ is not locally CR equivalent to $\mathbb{S}^{2m+1}$.
To our best knowledge, the conjecture that $Y\left(  M\right)  <Y\left(
\mathbb{S}^{2m+1}\right)  $ unless $M$ is CR equivalent to $\mathbb{S}^{2m+1}$
is still open in the remaining cases. However, it is now known that for all
compact strictly pseudoconvex CR manifold $M$ there is always a
pseudohermitian structure $\theta$ on $M$ whose scalar curvature is constant
by the more recent work Gamara \cite{G} and Gamara-Yacoub \cite{GY}.

Similar to the Riemannian case, there is a unique constant scalar curvature
pseudohermitian structure on $M$ up to scaling when $Y\left(  M\right)  \leq
0$. But uniqueness in general fails if $Y\left(  M\right)  >0$. For the CR
sphere $\mathbb{S}^{2m+1}$ Jerison and Lee \cite{JL2} classified all
pseudohermitian structures with constant scalar curvature. This is of
fundamental importance for the whole program of CR Yamabe problem.

\begin{theorem}
Suppose $\theta=\phi\theta_{c}$ is a pseudohermitian structure on
$\mathbb{S}^{2m+1}$. Then $\theta$ has constant scalar curvature iff $\phi$ is
of the following form:%
\[
\phi\left(  z\right)  =c\left\vert \cosh t+\left(  \sinh t\right)
z\cdot\overline{\xi}\right\vert ^{-2}%
\]
for some $c>0,t\geq0$ and $\xi\in\mathbb{S}^{2m+1}$.
\end{theorem}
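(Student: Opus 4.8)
The plan is to transplant the logic of Obata's proof (the theorem above) into the CR category: the Riemannian divergence identity for the traceless Ricci tensor is replaced by the integral formula of Jerison and Lee, and the part played there by the ambient Einstein metric is played here by the fact that $\theta_{c}$ is itself an Einstein pseudohermitian structure. The ``if'' direction is the easy one. The CR automorphism group of $\mathbb{S}^{2m+1}$ is $PU(m+1,1)$, and for any $\Phi$ in this group one has $\Phi^{*}\theta_{c}=\psi\,\theta_{c}$ for a positive function $\psi$; since $\Phi$ is then a pseudohermitian isometry from $(\mathbb{S}^{2m+1},\psi\,\theta_{c})$ to $(\mathbb{S}^{2m+1},\theta_{c})$, the structure $\psi\,\theta_{c}$ again has constant scalar curvature $m(m+1)/2$. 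Parametrising the automorphisms modulo the unitary subgroup (which fixes $\theta_{c}$) by a parameter $t\ge 0$ and a direction $\xi\in\mathbb{S}^{2m+1}$, a direct computation shows that the functions $\psi$ obtained this way are exactly $\left\vert \cosh t+(\sinh t)\,z\cdot\overline{\xi}\right\vert^{-2}$; multiplying by an arbitrary $c>0$, which rescales the scalar curvature by $1/c$ but keeps it constant, yields every $\phi$ in the list.

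For the ``only if'' direction, first rescale $\phi$ by a positive constant so that $\theta=\phi\,\theta_{c}$ has scalar curvature equal to that of $\theta_{c}$, write $\phi=u^{2/m}$ with $u>0$, and apply the conformal transformation law recalled above with $\theta_{c}$ as the base structure: the positive function $u$ then solves the CR Yamabe equation
\[
-\frac{2(m+1)}{m}\Delta_{b}u+\frac{m(m+1)}{2}u=\frac{m(m+1)}{2}u^{(m+2)/m},
\]
where from now on $\Delta_{b}$ and all covariant derivatives are taken with respect to the Einstein structure $\theta_{c}$; note that $u\equiv 1$ is a solution. It suffices to show that every positive solution is obtained from $u\equiv 1$ by an element of $PU(m+1,1)$. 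The crux is the Jerison--Lee identity. Working with $v:=u^{-2/m}=\phi^{-1}$, which is the convenient unknown, one constructs from its second covariant derivatives a pair of tensors --- essentially the $(2,0)$-Hessian $v_{,\alpha\beta}$ and the trace-free part of the $(1,1)$-Hessian $v_{,\alpha\overline{\beta}}$, each corrected by suitable lower-order terms so that they vanish identically on the model solutions --- together with a nonnegative weight $w$ (a power of $v$), arranged so that $w$ times a positive-coefficient combination $Q$ of the squared norms of these tensors equals the divergence of an explicit vector field built from $v$ and $\nabla_{b}v$. Integrating over the closed manifold $\mathbb{S}^{2m+1}$ gives $\int_{\mathbb{S}^{2m+1}}w\,Q\,dv_{c}=0$, hence $Q\equiv 0$ and both corrected Hessians vanish.

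Establishing that identity is the step I expect to be the main obstacle; it is where every hypothesis has to be spent. The constancy of the scalar curvature of $\theta$, the vanishing of the torsion of $\theta_{c}$, and the explicit value of the pseudohermitian Ricci tensor of $\theta_{c}$ must all conspire so that the terms which are not manifestly a square cancel; this is the ``remarkable'' cancellation, and it is precisely the point at which nothing changes when $\theta_{c}$ is replaced by an arbitrary Einstein pseudohermitian structure, which is how the result gets extended. Compared with the Riemannian (Obata) computation there are two genuinely new features in the bookkeeping: one must carry along the pseudohermitian torsion of $\theta$, which is generically nonzero even though that of $\theta_{c}$ vanishes, and one must track the covariant derivatives in the Reeb direction, which have no Riemannian counterpart; both have to be shown to reassemble into the single nonnegative term $w\,Q$.

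Finally, from the vanishing of the two corrected Hessians one runs the classification. This is a CR-invariant overdetermined second-order system for $v=\phi^{-1}$; unwinding it --- either directly, after first analysing its consequences in the Reeb direction, or by recognising that it forces $\theta$ to be pseudohermitian-isometric to $\theta_{c}$, in analogy with the Riemannian rigidity for functions with vanishing trace-free Hessian on a compact Einstein manifold that underlies Obata's theorem --- shows that $v$ must be the restriction to $\mathbb{S}^{2m+1}\subset\mathbb{C}^{m+1}$ of $\left\vert \ell\right\vert^{2}$ for an affine-linear function $\ell$ on $\mathbb{C}^{m+1}$, that is $v(z)=\left\vert \cosh t+(\sinh t)\,z\cdot\overline{\xi}\right\vert^{2}$ up to a positive constant. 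Combining this with the ``if'' direction and restoring the normalising constant $c>0$ gives the asserted form of $\phi$.
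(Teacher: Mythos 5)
Your outline reproduces the architecture of the actual proof (conformal covariance for the ``if'' direction; an integral divergence identity plus a rigidity argument for ``only if''), but as written it has a genuine gap exactly where the theorem lives: you never establish the divergence identity, you only postulate that Hessian-type tensors ``corrected by suitable lower-order terms'' and a weight can be ``arranged so that'' a positive combination of squares equals a divergence, and you flag this yourself as the main obstacle. That identity \emph{is} the content of the theorem. Concretely, with $D_{\alpha\beta}=-\sqrt{-1}A_{\alpha\beta}$, $E_{\alpha\overline{\beta}}=-\tfrac{1}{m+2}B_{\alpha\overline{\beta}}$, $U_{\alpha}=-\tfrac{2}{m+2}\sqrt{-1}A_{\alpha\beta,\overline{\beta}}$ and $g=\tfrac12+\tfrac12\phi+\phi^{-1}\left\vert\partial\phi\right\vert^{2}+\sqrt{-1}\phi_{0}$, one must show that $\operatorname{Re}\left(gD_{\alpha}+\overline{g}E_{\alpha}-3\phi_{0}\sqrt{-1}U_{\alpha}\right)_{,\overline{\alpha}}$ equals an explicit sum of squares with weights $\tfrac12+\tfrac12\phi$ and $\phi$; verifying this requires first translating the Einstein condition on the background into $A_{\alpha\beta}=\sqrt{-1}\phi^{-1}\phi_{\alpha,\beta}$, a formula for $B_{\alpha\overline{\beta}}$, and the trace identity for $\phi_{\gamma,\overline{\gamma}}$, and then a sustained computation with the CR commutation and Bianchi identities. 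Nothing in your proposal performs or replaces this. Note also a setup discrepancy: in the identity all covariant derivatives, the torsion and the Ricci tensor are those of the \emph{unknown} constant-scalar-curvature structure $\theta$, with the conformal factor relating it to the Einstein structure, whereas you propose to differentiate with respect to $\theta_{c}$; your scheme would need to be redone in the correct gauge.

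The endgame is also not the routine ``unwinding'' you describe. Integration gives $A_{\alpha\beta}=0$, $B_{\alpha\overline{\beta}}=0$, $U_{\alpha}=0$, i.e.\ an overdetermined system for $\phi$, and the passage from this to $\phi(z)=c\left\vert\cosh t+(\sinh t)\,z\cdot\overline{\xi}\right\vert^{-2}$ requires a genuine argument: one shows $u=\log\phi$ is CR pluriharmonic (immediate from the system when $m\geq2$, but needing an extra differentiation when $m=1$), takes a pluriharmonic conjugate $v$ on the universal cover, forms $f=e^{u/2}\cos(v/2)-c$, computes that its Riemannian Hessian with respect to the adapted metric $g_{\theta}$ is a function times $g_{\theta}$, and then invokes Obata's Riemannian theorem together with a further argument (the Reeb field of $\theta$ is Killing for the round metric, hence linear with all rotation speeds equal) to identify $\theta$ with $\theta_{c}$ and read off $\phi=\left\vert\lambda+az\cdot\overline{\xi}\right\vert^{2}$. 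On the sphere some of this can be shortened, but ``one runs the classification'' does not supply it. Your ``if'' direction and the preliminary normalizations (positivity of the constant scalar curvature, rescaling to $R=\widetilde{R}=m(m+1)/2$) are fine; the two items above are the missing substance.
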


\bigskip The key ingredient in the proof is the following remarkable, highly
nontrivial identity on $\left(  \mathbb{S}^{2m+1},\theta\right)  $:
\begin{align*}
&  \operatorname{Re}\left(  gD_{\alpha}+\overline{g}E_{\alpha}-3\phi_{0}%
\sqrt{-1}U_{\alpha}\right)  _{,\overline{\alpha}}\\
&  =\left(  \frac{1}{2}+\frac{1}{2}\phi\right)  \left(  \left\vert
D_{\alpha\beta}\right\vert ^{2}+\left\vert E_{\alpha\overline{\beta}%
}\right\vert ^{2}\right) \\
&  +\phi\left[  \left\vert D_{\alpha}-U_{\alpha}\right\vert ^{2}+\left\vert
U_{\alpha}+E_{\alpha}-D_{\alpha}\right\vert ^{2}+\left\vert U_{\alpha
}+E_{\alpha}\right\vert ^{2}+\left\vert \phi^{-1}\phi_{\overline{\gamma}%
}D_{\alpha\beta}+\phi^{-1}\phi_{\beta}E_{\alpha\overline{\gamma}}\right\vert
^{2}\right]  .
\end{align*}
where%
\begin{align*}
D_{\alpha\beta}  &  =\phi^{-1}\phi_{\alpha,\beta},D_{\alpha}=\phi^{-1}%
\phi_{\overline{\beta}}D_{\alpha\beta},E_{\alpha}=\phi^{-1}\phi_{\gamma
}E_{\alpha\overline{\gamma}},\\
E_{\alpha\overline{\beta}}  &  =\phi^{-1}\phi_{\alpha,\overline{\beta}}%
-\phi^{-2}\phi_{\alpha}\phi_{\overline{\beta}}-\frac{1}{2}\left(  \frac{1}%
{2}\phi^{-1}-\frac{1}{2}+\phi^{-2}\left\vert \partial\phi\right\vert
^{2}+\sqrt{-1}\phi^{-1}\phi_{0}\right)  \delta_{\alpha\overline{\beta}},\\
U_{\alpha}  &  =-\frac{2}{m+2}\sqrt{-1}A_{\alpha\beta,\overline{\beta}%
},g=\frac{1}{2}+\frac{1}{2}\phi+\phi^{-1}\left\vert \partial\phi\right\vert
^{2}+\sqrt{-1}\phi_{0}.
\end{align*}

We should also mention the recent deep work \cite{FL} by Frank and Lieb in
which the sharp Hardy-Littlewood-Sobolev inequality, of which (\ref{Sob}) is a
special case, on the Heisenberg group is established. Their paper also
contains a new and shorter proof of the Sobolev inequality (\ref{Sob}) as well
as a nice argument which yields the classification of all the minimizers quickly.

The purpose of this work is to point out that the Jerison-Lee identity is
valid on any closed Einstein pseudohermitian manifold $\left(  M^{2m+1}%
,\theta\right)  $ and as an application prove the following uniqueness theorem
which generalizes the above result of Jerison and Lee.

\begin{theorem}
Let $\left(  M^{2m+1},\widetilde{\theta}\right)  $ be a closed Einstein
pseudohermitian manifold. Suppose $\theta=\phi\widetilde{\theta}$ is another
pseudohermitian structure with constant pseudohermitian scalar curvature. Then

\begin{itemize}
\item $\theta$ is Einstein as well;

\item furthermore $\phi$ must be constant unless $\left(  M^{2m+1}%
,\widetilde{\theta}\right)  $ is CR isometric to $\left(  \mathbb{S}%
^{2m+1},\theta_{c}\right)  $ up to a scaling and $\phi$ corresponds to the
following function on $\mathbb{S}^{2m+1}$%
\[
\phi\left(  z\right)  =c\left\vert \cosh t+\left(  \sinh t\right)
z\cdot\overline{\xi}\right\vert ^{-2}%
\]
for some $c>0,t\geq0$ and $\xi\in\mathbb{S}^{2m+1}$.
\end{itemize}
\end{theorem}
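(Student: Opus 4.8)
The plan is to mimic Obata's Riemannian argument, with the Jerison--Lee identity --- now available on any closed Einstein pseudohermitian manifold --- playing the role of Obata's divergence formula. One preliminary reduction disposes of the nonpositive case: writing $R^{\widetilde\theta}_{\alpha\overline\beta}=\lambda\delta_{\alpha\beta}$, if $\lambda\le0$ then $\widetilde\theta$ has constant scalar curvature $m\lambda\le0$ (Proposition~\ref{bi} for $m\ge2$, trivially for $m=1$), so $Y(M)\le Y(M,\widetilde\theta)\le0$; by the uniqueness of constant scalar curvature contact forms up to scaling in that regime, $\theta$ is a constant multiple of $\widetilde\theta$, hence $\phi$ is constant and $\theta$ is Einstein. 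So from now on assume $\lambda>0$, and rescale $\widetilde\theta$ so that the Jerison--Lee identity applies in the stated form.

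Since $\widetilde\theta$ is torsion-free we have $A_{\alpha\beta}=0$, hence $U_\alpha=-\tfrac{2}{m+2}\sqrt{-1}A_{\alpha\beta,\overline\beta}=0$, and the identity on $(M,\widetilde\theta)$ collapses to
\[
\operatorname{Re}\bigl(gD_\alpha+\overline g E_\alpha\bigr)_{,\overline\alpha}
=\Bigl(\tfrac12+\tfrac12\phi\Bigr)\bigl(\lvert D_{\alpha\beta}\rvert^2+\lvert E_{\alpha\overline\beta}\rvert^2\bigr)
+\phi\Bigl(\lvert D_\alpha\rvert^2+\lvert E_\alpha-D_\alpha\rvert^2+\lvert E_\alpha\rvert^2+\bigl\lvert\phi^{-1}\phi_{\overline\gamma}D_{\alpha\beta}+\phi^{-1}\phi_\beta E_{\alpha\overline\gamma}\bigr\rvert^2\Bigr).
\]
The left-hand side is the divergence of a globally defined vector field on the closed manifold $M$, so it integrates to zero; the right-hand side is pointwise $\ge0$ because $\phi>0$. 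Integrating, every term on the right must vanish identically; in particular $D_{\alpha\beta}\equiv0$ and $E_{\alpha\overline\beta}\equiv0$, and then the remaining terms vanish too, since $D_\alpha$ and $E_\alpha$ are contractions of $D_{\alpha\beta}$ and $E_{\alpha\overline\beta}$ against $\partial\phi$.

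Next I would substitute these two vanishings into Lee's conformal transformation formulas for the torsion and the pseudohermitian Ricci tensor under $\widetilde\theta\mapsto\phi\widetilde\theta$. The quantity $D_{\alpha\beta}$ is the $(2,0)$-Hessian term governing the change of $A_{\alpha\beta}$, and $E_{\alpha\overline\beta}$ is the trace-free $(1,1)$-Hessian term governing the change of the Ricci tensor once the constant scalar curvature equation for $\theta$ has been used to eliminate $\Delta_b\phi$ --- this is the only place that hypothesis enters. Since $A^{\widetilde\theta}=0$ and $B^{\widetilde\theta}_{\alpha\overline\beta}=0$, the vanishing of $D_{\alpha\beta}$ and $E_{\alpha\overline\beta}$ forces $A^{\theta}=0$ and $B^{\theta}_{\alpha\overline\beta}=0$: $\theta$ is torsion-free and pseudo-Einstein, hence, being of constant scalar curvature, Einstein (Proposition~\ref{bi} for $m\ge2$, directly for $m=1$). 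This proves the first bullet.

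For the second bullet, what is left is an overdetermined system for $\phi$: $D_{\alpha\beta}\equiv0$ and $E_{\alpha\overline\beta}\equiv0$ prescribe $\phi_{\alpha,\beta}$ and $\phi_{\alpha,\overline\beta}$ in terms of $\phi$ and $\partial\phi$, and commuting covariant derivatives (using that the curvature and torsion of $\widetilde\theta$ are $\lambda\delta$ and $0$) then prescribes the Reeb derivative $\phi_0$ and its derivatives, so the system is closed. If $\partial\phi\equiv0$ then $\phi$ is constant. If not, one runs the analysis of Jerison and Lee: after a change of dependent variable the system is recognized as the one characterizing the sphere, and integrating it along the level sets of $\phi$ (equivalently, along the trajectories of the horizontal gradient of $\phi$) reconstructs the geometry of $M$, forcing $(M,\widetilde\theta)$ to be $(\mathbb{S}^{2m+1},\theta_c)$ up to scaling and a CR automorphism and $\phi$ to be the stated function. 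That a nonconstant solution can occur only on the sphere also follows from Schoen's theorem, since $D_{\alpha\beta}\equiv0$ exhibits $\phi$ as the Hamiltonian of an infinitesimal CR automorphism of $M$ whose flow, $\phi$ being nonconstant, is noncompact. This rigidity step --- extracting the global conclusion from the overdetermined system, in particular controlling the behaviour of $\phi$ near its critical set where the gradient flow degenerates --- is the main obstacle; everything before it is the by-now-standard mechanism of integrating a nonnegative divergence identity together with bookkeeping in the conformal transformation formulas.
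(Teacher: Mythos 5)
Your first bullet follows the paper's strategy (integrate the Jerison--Lee identity over the closed manifold and use positivity of the right-hand side), but it contains a conceptual slip: in Proposition~\ref{JLI} the tensor $A_{\alpha\beta}$ entering $D_{\alpha\beta}=-\sqrt{-1}A_{\alpha\beta}$ and $U_{\alpha}=-\frac{2}{m+2}\sqrt{-1}A_{\alpha\beta,\overline{\beta}}$ is the torsion of $\theta$, the constant scalar curvature structure, not of the Einstein structure $\widetilde{\theta}$; the Einstein hypothesis is only used to rewrite $A_{\alpha\beta}=\sqrt{-1}\phi^{-1}\phi_{\alpha,\beta}$ and $B_{\alpha\overline{\beta}}$ in terms of $\phi$. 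So you may not set $U_{\alpha}=0$ a priori --- the vanishing of the torsion of $\theta$ is part of the conclusion --- and it is internally inconsistent to drop $U_{\alpha}$ while retaining $D_{\alpha\beta}$, since both are built from the same $A_{\alpha\beta}$. This is repairable: integrating the full identity, with the $U_{\alpha}$ terms present, yields $D_{\alpha\beta}=E_{\alpha\overline{\beta}}=U_{\alpha}=0$, and since $D_{\alpha\beta}$ and $E_{\alpha\overline{\beta}}$ are (up to constants) exactly the torsion and trace-free Ricci of $\theta$, the first bullet follows as you say.

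The genuine gap is the second bullet, which you do not prove: you defer it to ``the analysis of Jerison and Lee'' and a gradient-flow reconstruction, and you yourself label this rigidity step the main obstacle. The alternative you sketch via Schoen's theorem also has an unjustified step: $D_{\alpha\beta}=0$ does make $\phi$ the Hamiltonian of an infinitesimal CR automorphism, but a nonconstant Hamiltonian can perfectly well generate a compact (e.g.\ circle) flow, so noncompactness of the automorphism group needs an argument; and even granting that $M$ is the sphere, you would still have to derive the explicit form of $\phi$. The paper closes this gap concretely: from the vanishing of $D,E,U$ the function $u=\log\phi$ satisfies a closed first-order system; one shows $u$ is CR pluriharmonic (the case $m=1$ requires an extra differentiation and \cite[Proposition 3.4]{Lee}); since $g_{\theta}$ is Riemannian Einstein with positive Ricci (Proposition~\ref{Rica}), $\pi_{1}(M)$ is finite and one passes to the universal cover, takes the pluriharmonic conjugate $v$, and sets $f=e^{u/2}\cos(v/2)-c$ with $\int f=0$. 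Proposition~\ref{crh} together with Proposition~\ref{Hess} shows that the Riemannian Hessian of $f$ satisfies $D^{2}f=-\chi\, g_{\theta}$, and an elementary lemma plus Obata's classical theorem identifies $(\widetilde{M},g_{\theta})$ with the round sphere and $f$ with a linear function; the Killing property of the Reeb field (the argument from \cite{LW}) then upgrades this to a CR isometry with $(\mathbb{S}^{2m+1},\theta_{c})$, the deck group is forced to be trivial, and finally $\phi=\left\vert \lambda+az\cdot\overline{\xi}\right\vert^{2}$, which is the stated form. None of this mechanism --- CR pluriharmonicity, the auxiliary function $f$, the reduction to the Riemannian Obata theorem, and the identification of the contact form via the Reeb field --- appears in your proposal, so as written the second bullet remains unproved.
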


\bigskip More precisely, the second part means that if $\phi$ is not constant
then there exists a CR diffeomorphism $F:M\rightarrow\mathbb{S}^{2m+1}$ s.t.
$F^{\ast}\theta_{c}=\lambda\widetilde{\theta}$ for some $\lambda>0$ and
$\phi\circ F^{-1}$ is of the form above on $\mathbb{S}^{2m+1}$.

The paper is organized as follows. In Section 2 we present the Jerison-Lee
identity on any closed Einstein pseudohermitian manifold $\left(
M^{2m+1},\theta\right)  $. Using this identity we will prove the above theorem
in Section 3. There is an appendix in which we collect several formulas in CR
geometry that are needed in Section 3.

\textbf{Acknowledgements. }The author is very grateful to Professor Jerison
and Professor Lee for kindly helping him understand their work \cite{JL2}. He
also wishes to thanks Song-Ying Li and Meijun Zhu for useful discussions.

\section{The Jerison-Lee identity}

In this Section we discuss the Jerison-Lee identity from \cite{JL2}. Though it
is only stated for the CR sphere $\mathbb{S}^{2m+1}$ there, the identity and
its proof are valid on any closed Einstein pseudohermitian manifold. To
publicize this remarkable identity in the wider context and also for
completeness, we present a detailed proof following \cite{JL2} faithfully.
Therefore this Section is expository. We use slightly different notation and
provide more details at several places to make the proof easier to follow.

Let $\left(  M^{2m+1},\theta\right)  $ be a pseudohermitian manifold and
$\phi$ a smooth and positive function. Consider $\widetilde{\theta}=\phi
^{-1}\theta$. The pseudohermitian invariants transform as follows
(\cite{Lee}):
\begin{align*}
\widetilde{A}_{\alpha\beta}  &  = A_{\alpha\beta}-\sqrt{-1}\phi^{-1}%
\phi_{\alpha,\beta}\\
\widetilde{B}_{\alpha\overline{\beta}}  &  = B_{\alpha\overline{\beta}%
}+\left(  m+2\right)  \left(  \phi^{-1}\phi_{\alpha,\overline{\beta}}%
-\phi^{-2}\phi_{\alpha}\phi_{\overline{\beta}}\right)  -\frac{m+2}{m}\left(
\phi^{-1}\phi_{\gamma,\overline{\gamma}}-\phi^{-2}\left\vert \partial
\phi\right\vert ^{2}\right)  h_{\alpha\overline{\beta}}\\
\widetilde{R}  &  = \phi R+\left(  m+1\right)  \Delta_{b}\phi-m\left(
m+1\right)  \phi^{-1}\left\vert \partial\phi\right\vert ^{2}.
\end{align*}
here we are working with a local unitary frame $\{T_{\alpha}:\alpha
=1,\cdots,m\}$ w.r.t. $\theta$.

\begin{proposition}
\label{JLI}Let $\theta$ and $\widetilde{\theta}=\phi^{-1}\theta$ be two
pseudohermitian structures on a closed manifold $M^{2m+1}$. Suppose that both
$\theta$ and $\widetilde{\theta}$ have constant scalar curvature $m\left(
m+1\right)  /2$ and $\widetilde{\theta}$ is Einstein. Set%
\begin{align*}
D_{\alpha\beta}  &  =-\sqrt{-1}A_{\alpha\beta},D_{\alpha}=\phi^{-1}%
\phi_{\overline{\beta}}D_{\alpha\beta},\\
E_{\alpha\overline{\beta}}  &  =-\frac{1}{m+2}B_{\alpha\overline{\beta}%
},E_{\alpha}=\phi^{-1}\phi_{\beta}E_{\alpha\overline{\beta}},\\
U_{\alpha}  &  =-\frac{2}{m+2}\sqrt{-1}A_{\alpha\beta,\overline{\beta}},\\
g  &  =\frac{1}{2}+\frac{1}{2}\phi+\phi^{-1}\left\vert \partial\phi\right\vert
^{2}+\sqrt{-1}\phi_{0}.
\end{align*}
Then%
\begin{align}
&  \operatorname{Re}\left(  gD_{\alpha}+\overline{g}E_{\alpha}-3\phi_{0}%
\sqrt{-1}U_{\alpha}\right)  _{,\overline{\alpha}}\label{JLE}\\
&  =\left(  \frac{1}{2}+\frac{1}{2}\phi\right)  \left(  \left\vert
D_{\alpha\beta}\right\vert ^{2}+\left\vert E_{\alpha\overline{\beta}%
}\right\vert ^{2}\right) \nonumber\\
&  +\phi\left[  \left\vert D_{\alpha}-U_{\alpha}\right\vert ^{2}+\left\vert
U_{\alpha}+E_{\alpha}-D_{\alpha}\right\vert ^{2}+\left\vert U_{\alpha
}+E_{\alpha}\right\vert ^{2}+\left\vert \phi^{-1}\phi_{\overline{\gamma}%
}D_{\alpha\beta}+\phi^{-1}\phi_{\beta}E_{\alpha\overline{\gamma}}\right\vert
^{2}\right]  .\nonumber
\end{align}

\end{proposition}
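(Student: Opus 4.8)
The plan is to verify the identity (\ref{JLE}) by a direct but carefully organized computation, following the strategy that the right-hand side is manifestly nonnegative (after integration it will force all the listed square terms to vanish), so the content of the Proposition is purely the \emph{pointwise} algebraic-differential identity. I would start by recording the transformation laws stated just above the Proposition, specialized to the hypotheses $\widetilde R = R = m(m+1)/2$ and $\widetilde B_{\alpha\overline\beta}=0$. The Einstein condition on $\widetilde\theta$ gives two scalar relations: from $\widetilde B_{\alpha\overline\beta}=0$ one solves for the Hermitian part $\phi_{\alpha,\overline\beta}$ in terms of $E_{\alpha\overline\beta}=-\tfrac1{m+2}B_{\alpha\overline\beta}$, the quantity $\phi^{-2}\phi_\alpha\phi_{\overline\beta}$, and a multiple of $\delta_{\alpha\overline\beta}$ coming from the trace; and from $\widetilde R = m(m+1)/2$ together with $R=m(m+1)/2$ one gets the scalar equation relating $\Delta_b\phi$, $\phi^{-1}|\partial\phi|^2$ and $\phi$. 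These two relations are exactly what make the definition of $E_{\alpha\overline\beta}$ in Proposition \ref{bi}'s notation (the version with $A_{\alpha\beta}$) match the ``sphere'' version of $E_{\alpha\overline\beta}$ in the Introduction, and the function $g$ collapses to the stated form. I would also note $D_{\alpha\beta}=-\sqrt{-1}A_{\alpha\beta}$, so $D_{\alpha\beta}$ is automatically symmetric, and that $\widetilde A_{\alpha\beta}=A_{\alpha\beta}-\sqrt{-1}\phi^{-1}\phi_{\alpha,\beta}$ will enter when we commute covariant derivatives.

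Next I would compute the divergence on the left-hand side term by term. Expanding $(gD_\alpha)_{,\overline\alpha} = g_{,\overline\alpha}D_\alpha + g\,D_{\alpha,\overline\alpha}$ and similarly for the other two pieces, the ingredients needed are: the covariant derivative of $g$ (which involves $\phi_{0\overline\alpha}$, $\phi_{\alpha\overline\alpha}$-type terms, and $\phi^{-1}\phi_\beta\phi_{\beta\overline\alpha}$ etc.); the divergence $D_{\alpha,\overline\alpha}$, which by the definitions equals $\phi^{-1}\phi_{\overline\beta,\overline\alpha}D_{\alpha\beta} + \phi^{-1}\phi_{\overline\beta}D_{\alpha\beta,\overline\alpha}$ — here the second Bianchi-type identity $A_{\alpha\beta,\overline\beta}$ (which is $U_\alpha$ up to a constant) appears naturally, as does a commutation term producing curvature; and the analogous expansion of $E_{\alpha,\overline\alpha}$, where differentiating $E_{\alpha\overline\beta}=-\tfrac1{m+2}B_{\alpha\overline\beta}$ and using the contracted Bianchi identity for $B_{\alpha\overline\beta,\beta}$ from the Introduction again yields $U_\alpha$-terms. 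The term $-3\phi_0\sqrt{-1}(U_\alpha)_{,\overline\alpha}$ requires the divergence of $A_{\alpha\beta,\overline\beta}$, for which one commutes $\overline\partial$ past the torsion using the CR commutation formulas in the appendix; this is where curvature terms $R_{\alpha\overline\beta}$ enter and get absorbed using the Einstein condition on $\widetilde\theta$ translated back to $\theta$ via the transformation law for $\widetilde B_{\alpha\overline\beta}$.

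Having assembled the full expansion of the left-hand side, the final step is to take its real part and reorganize the resulting polynomial in $\phi$, $\phi^{-1}$, $\phi_\alpha$, $\phi_{\alpha\beta}$, $A_{\alpha\beta}$, $B_{\alpha\overline\beta}$, $\phi_0$ and their first derivatives into the four squared-modulus blocks on the right: first peel off $(\tfrac12+\tfrac12\phi)(|D_{\alpha\beta}|^2+|E_{\alpha\overline\beta}|^2)$, then complete squares in the vector quantities $D_\alpha, E_\alpha, U_\alpha$ to produce $|D_\alpha-U_\alpha|^2+|U_\alpha+E_\alpha-D_\alpha|^2+|U_\alpha+E_\alpha|^2$, and finally recognize the leftover as $\phi\,|\phi^{-1}\phi_{\overline\gamma}D_{\alpha\beta}+\phi^{-1}\phi_\beta E_{\alpha\overline\gamma}|^2$. \textbf{The main obstacle} is precisely this last bookkeeping: the intermediate expression has a large number of terms, many cancellations are delicate, and one must repeatedly invoke the two constraints (Einstein, constant scalar curvature) in just the right combinations and use the symmetry $D_{\alpha\beta}=D_{\beta\alpha}$ and the reality/Hermitian symmetry of $E_{\alpha\overline\beta}$. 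I expect the identity to follow by organizing the computation so that the two hypotheses are substituted as late as possible, keeping track of which terms are ``error terms'' that must cancel against the commutator-of-derivatives curvature contributions; since the excerpt states the Proposition follows \cite{JL2} ``faithfully,'' the real work is transcription and verification rather than discovery, and no additional geometric input beyond the appendix's commutation formulas should be required.
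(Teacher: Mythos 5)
Your proposal follows essentially the same route as the paper's proof: use the transformation laws with $\widetilde A_{\alpha\beta}=0$, $\widetilde B_{\alpha\overline{\beta}}=0$ and $R=\widetilde R=m(m+1)/2$ to express $D_{\alpha\beta}$, $E_{\alpha\overline{\beta}}$ and the trace $\phi_{\gamma,\overline{\gamma}}$ explicitly, expand the divergence on the left by the product rule using the contracted Bianchi identities for $A_{\alpha\beta,\overline{\beta}}$ and $B_{\alpha\overline{\beta},\beta}$ (with $R_{\alpha}=0$), and finish by completing squares in $D_{\alpha}$, $E_{\alpha}$, $U_{\alpha}$; this is exactly how the paper proceeds, including the key intermediate expressions for $U_{\alpha}$, $\sqrt{-1}\phi_{0,\overline{\alpha}}$, $g_{\overline{\alpha}}$, $\overline{g}_{\overline{\alpha}}$. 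The only simplification you miss is that the term $-3\phi_{0}\sqrt{-1}U_{\alpha,\overline{\alpha}}$ needs no commutation computation at all: since only its real part enters, the Bianchi identity $2\operatorname{Re}A_{\alpha\beta,\overline{\beta}\overline{\alpha}}=R_{0}=0$ kills it outright.
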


\begin{remark}
\bigskip Jerison-Lee \cite{JL2} use the normalization $R=\widetilde
{R}=m\left(  m+1\right)  $. We instead use the normalization $R=\widetilde
{R}=m\left(  m+1\right)  /2$. This is why some of our coefficients are
different. Our normalization has the advantage that the adapted metric for
$\theta_{c}$ on $\mathbb{S}^{2m+1}$ is round.
\end{remark}

Since $\widetilde{\theta}$ is Einstein and $R=\widetilde{R}=m\left(
m+1\right)  /2$, we have
\begin{align}
A_{\alpha\beta}  &  =\sqrt{-1}\phi^{-1}\phi_{\alpha,\beta},\label{FA}\\
B_{\alpha\overline{\beta}}  &  =-\left(  m+2\right)  \left(  \phi^{-1}%
\phi_{\alpha,\overline{\beta}}-\phi^{-2}\phi_{\alpha}\phi_{\overline{\beta}%
}\right)  +\frac{m+2}{m}\left(  \phi^{-1}\phi_{\gamma,\overline{\gamma}}%
-\phi^{-2}\left\vert \partial\phi\right\vert ^{2}\right)  h_{\alpha
\overline{\beta}},\label{FB}\\
\phi_{\gamma,\overline{\gamma}}  &  =\frac{m}{4}-\frac{m}{4}\phi+\frac{m+2}%
{2}\phi^{-1}\left\vert \partial\phi\right\vert ^{2}+\frac{m}{2}\sqrt{-1}%
\phi_{0}. \label{FR}%
\end{align}
Thus%
\begin{align}
D_{\alpha\beta}  &  =\phi^{-1}\phi_{\alpha,\beta},\label{FD}\\
E_{\alpha\overline{\beta}}  &  =\phi^{-1}\phi_{\alpha,\overline{\beta}}%
-\phi^{-2}\phi_{\alpha}\phi_{\overline{\beta}}-\frac{1}{2}\left(  \frac{1}%
{2}\phi^{-1}-\frac{1}{2}+\phi^{-2}\left\vert \partial\phi\right\vert
^{2}+\sqrt{-1}\phi^{-1}\phi_{0}\right)  \delta_{\alpha\overline{\beta}}.
\label{FE}%
\end{align}
We compute%
\begin{align*}
\phi_{\alpha\beta,\overline{\beta}}  &  =\phi_{\beta,\overline{\beta}\alpha
}+\sqrt{-1}\delta_{\alpha\overline{\beta}}\phi_{\beta,0}+R_{\alpha
\overline{\beta}}\phi_{\beta}\\
&  =-\frac{m}{4}\phi_{\alpha}-\frac{m+2}{2}\phi^{-2}\left\vert \partial
\phi\right\vert ^{2}+\frac{m+2}{2}\phi^{-1}\phi_{\overline{\beta}}\phi
_{\alpha,\beta}+\frac{m+2}{2}\phi^{-1}\phi_{\overline{\beta},\alpha}%
\phi_{\beta}\\
&  +\frac{m}{2}\sqrt{-1}\phi_{0,\alpha}+\sqrt{-1}\phi_{\alpha,0}%
+R_{\alpha\overline{\beta}}\phi_{\beta}\\
&  =-\frac{m}{4}\phi_{\alpha}-\frac{m+2}{2}\phi^{-2}\left\vert \partial
\phi\right\vert ^{2}\phi_{\alpha}+\frac{m+2}{2}\phi^{-1}\phi_{\overline{\beta
}}\phi_{\alpha,\beta}+\frac{m+2}{2}\phi^{-1}\phi_{\alpha,\overline{\beta}}%
\phi_{\beta}\\
&  -\sqrt{-1}\frac{m+2}{2}\phi^{-1}\phi_{\alpha}\phi_{0}+\frac{m+2}{2}%
\sqrt{-1}\phi_{0,\alpha}-\sqrt{-1}A_{\alpha\beta}\phi_{\overline{\beta}%
}+R_{\alpha\overline{\beta}}\phi_{\beta}%
\end{align*}
Using the decomposition $R_{\alpha\overline{\beta}}=B_{\alpha\overline{\beta}%
}-\frac{R}{m}\delta_{\alpha\beta}$, (\ref{FB}) and the fact $R=m\left(
m+1\right)  /2$ and simplifying we obtain%
\[
R_{\alpha\overline{\beta}}\phi_{\beta}=\frac{m+2}{2}\left(  -2\phi^{-1}%
\phi_{\alpha,\overline{\beta}}\phi_{\beta}+3\phi^{-2}\left\vert \partial
\phi\right\vert ^{2}\phi_{\alpha}+\frac{1}{2}\phi^{-1}\phi_{\alpha}+\sqrt
{-1}\phi^{-1}\phi_{0}\phi_{\alpha}\right)  +\frac{m}{4}\phi_{\alpha}.
\]
Plugging it into the previous formula yields%
\begin{align*}
\phi_{\alpha\beta,\overline{\beta}}  &  =-\frac{m}{4}\phi_{\alpha}-\frac
{m+2}{2}\phi^{-2}\left\vert \partial\phi\right\vert ^{2}\phi_{\alpha}%
+\frac{m+2}{2}\phi^{-1}\phi_{\overline{\beta}}\phi_{\alpha,\beta}+\frac
{m+2}{2}\phi^{-1}\phi_{\alpha,\overline{\beta}}\phi_{\beta}\\
&  -\sqrt{-1}\frac{m+2}{2}\phi^{-1}\phi_{\alpha}\phi_{0}+\frac{m+2}{2}%
\sqrt{-1}\phi_{0,\alpha}-\sqrt{-1}A_{\alpha\beta}\phi_{\overline{\beta}}\\
&  +\frac{m+2}{2}\left(  -2\phi^{-1}\phi_{\alpha,\overline{\beta}}\phi_{\beta
}+3\phi^{-2}\left\vert \partial\phi\right\vert ^{2}\phi_{\alpha}+\frac{1}%
{2}\phi^{-1}\phi_{\alpha}+\sqrt{-1}\phi^{-1}\phi_{0}\phi_{\alpha}\right)
+\frac{m}{4}\phi_{\alpha}\\
&  =\phi^{-1}\phi_{\alpha,\beta}\phi_{\overline{\beta}}+\frac{m+2}{2}\left(
\sqrt{-1}\phi_{0,\alpha}+\phi^{-1}\phi_{\alpha,\beta}\phi_{\overline{\beta}%
}-\phi^{-1}\phi_{\alpha,\overline{\beta}}\phi_{\beta}+2\phi^{-2}\left\vert
\partial\phi\right\vert ^{2}\phi_{\alpha}+\frac{1}{2}\phi^{-1}\phi_{\alpha
}\right)
\end{align*}
Therefore%
\begin{align}
U_{\alpha}  &  =-\frac{2}{m+2}\sqrt{-1}A_{\alpha\beta,\overline{\beta}%
}\label{Ua1}\\
&  =\phi^{-1}\left[  \sqrt{-1}\phi_{0,\alpha}+2\phi^{-2}\left\vert
\partial\phi\right\vert ^{2}\phi_{\alpha}+\phi^{-1}\left(  \phi_{\alpha,\beta
}\phi_{\overline{\beta}}-\phi_{\alpha,\overline{\beta}}\phi_{\beta}+\frac
{1}{2}\phi_{\alpha}\right)  \right] \nonumber\\
&  =\phi^{-1}\left[  \sqrt{-1}\phi_{0,\alpha}+2\phi^{-2}\left\vert
\partial\phi\right\vert ^{2}\phi_{\alpha}+\phi D_{\alpha}-\phi^{-1}%
\phi_{\alpha,\overline{\beta}}\phi_{\beta}+\frac{1}{2}\phi^{-1}\phi_{\alpha
}\right] \nonumber
\end{align}

\begin{lemma}
\bigskip We have%
\begin{equation}
U_{\alpha}=\phi^{-1}\left[  \sqrt{-1}\phi_{0,\alpha}+\phi\left(  D_{\alpha
}-E_{\alpha}\right)  +\frac{1}{2}\phi^{-1}\overline{g}\phi_{\alpha}\right]  .
\label{Ua}%
\end{equation}

\end{lemma}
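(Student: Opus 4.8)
The plan is to prove the Lemma by a direct computation starting from the expression for $U_\alpha$ already derived in \eqref{Ua1}, namely
\[
U_{\alpha}=\phi^{-1}\left[  \sqrt{-1}\phi_{0,\alpha}+2\phi^{-2}\left\vert \partial\phi\right\vert ^{2}\phi_{\alpha}+\phi D_{\alpha}-\phi^{-1}\phi_{\alpha,\overline{\beta}}\phi_{\beta}+\frac{1}{2}\phi^{-1}\phi_{\alpha}\right],
\]
and matching it term-by-term against the target formula \eqref{Ua}. Comparing the two, it suffices to show that the bracketed quantities agree, i.e. that
\[
2\phi^{-2}\left\vert \partial\phi\right\vert ^{2}\phi_{\alpha}-\phi^{-1}\phi_{\alpha,\overline{\beta}}\phi_{\beta}+\tfrac{1}{2}\phi^{-1}\phi_{\alpha}=-\phi E_{\alpha}+\tfrac{1}{2}\phi^{-1}\overline{g}\phi_{\alpha},
\]
since the $\sqrt{-1}\phi_{0,\alpha}$ and $\phi D_\alpha$ terms already match. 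So the real content is to expand $-\phi E_\alpha$ using the definition $E_\alpha=\phi^{-1}\phi_\beta E_{\alpha\overline\beta}$ together with the explicit formula \eqref{FE} for $E_{\alpha\overline\beta}$.

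First I would substitute \eqref{FE} into $E_\alpha=\phi^{-1}\phi_\beta E_{\alpha\overline\beta}$, contracting with $\phi_\beta$; the term proportional to $\delta_{\alpha\overline\beta}$ contributes $-\tfrac12\bigl(\tfrac12\phi^{-1}-\tfrac12+\phi^{-2}|\partial\phi|^2+\sqrt{-1}\phi^{-1}\phi_0\bigr)\phi^{-1}\phi_\alpha$, and I would observe that the quantity in parentheses is exactly $\phi^{-1}(\overline g - \phi)$ after recalling $g=\tfrac12+\tfrac12\phi+\phi^{-1}|\partial\phi|^2+\sqrt{-1}\phi_0$, hence $\overline g=\tfrac12+\tfrac12\phi+\phi^{-1}|\partial\phi|^2-\sqrt{-1}\phi_0$. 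This is the step where the definition of $g$ is used crucially and where sign-tracking on the $\sqrt{-1}\phi_0$ term matters. Collecting everything, $-\phi E_\alpha$ becomes $-\phi^{-1}\phi_{\alpha,\overline\beta}\phi_\beta+\phi^{-2}|\partial\phi|^2\phi_\alpha+\tfrac12\phi^{-1}(\overline g-\phi)\phi_\alpha$, where the middle term comes from the $-\phi^{-2}\phi_\alpha\phi_{\overline\beta}$ piece of \eqref{FE} contracted with $\phi^{-1}\phi_\beta$ and multiplied by $-\phi$.

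Then I would verify the identity by substituting this into the right-hand side: $-\phi E_\alpha+\tfrac12\phi^{-1}\overline g\phi_\alpha=-\phi^{-1}\phi_{\alpha,\overline\beta}\phi_\beta+\phi^{-2}|\partial\phi|^2\phi_\alpha+\phi^{-1}\overline g\phi_\alpha-\tfrac12\phi_\alpha$; this does not match what is wanted, so one must be more careful and re-examine whether the target formula is instead obtained by also reorganizing the $2\phi^{-2}|\partial\phi|^2\phi_\alpha+\tfrac12\phi^{-1}\phi_\alpha$ terms on the left. The correct bookkeeping is that all the non-$\phi_{0,\alpha}$, non-$D_\alpha$ terms on both sides are pure multiples of $\phi_\alpha$ plus the single term $-\phi^{-1}\phi_{\alpha,\overline\beta}\phi_\beta$; the latter appears with coefficient $1$ on each side, and the scalar multiple of $\phi^{-1}\phi_\alpha$ must be checked to coincide using $\overline g=\tfrac12+\tfrac12\phi+\phi^{-1}|\partial\phi|^2-\sqrt{-1}\phi_0$. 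I expect the main obstacle to be precisely this clean algebraic reconciliation of the $\phi_\alpha$-coefficients and especially keeping the imaginary parts $\sqrt{-1}\phi_0$ consistent, since a sign error there is the easiest mistake; everything else is mechanical substitution of the definitions \eqref{FD}, \eqref{FE} and the formula for $g$.
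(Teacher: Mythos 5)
Your strategy is the same as the paper's: starting from (\ref{Ua1}), use (\ref{FE}) contracted with $\phi^{-1}\phi_{\beta}$ to trade $\phi^{-1}\phi_{\alpha,\overline{\beta}}\phi_{\beta}$ for $\phi E_{\alpha}$, and then check that the remaining multiples of $\phi_{\alpha}$ assemble to $\frac{1}{2}\phi^{-1}\overline{g}\,\phi_{\alpha}$. But your execution goes wrong at exactly the sign-sensitive step you yourself flag: the parenthetical quantity in (\ref{FE}), namely $\frac{1}{2}\phi^{-1}-\frac{1}{2}+\phi^{-2}\left\vert \partial\phi\right\vert ^{2}+\sqrt{-1}\phi^{-1}\phi_{0}$, equals $\phi^{-1}\left(  g-\phi\right)  $ and \emph{not} $\phi^{-1}\left(  \overline{g}-\phi\right)  $, since it carries $+\sqrt{-1}\phi^{-1}\phi_{0}$ just as $g$ carries $+\sqrt{-1}\phi_{0}$. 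The correct expansion is therefore
\[
-\phi E_{\alpha}=-\phi^{-1}\phi_{\alpha,\overline{\beta}}\phi_{\beta}+\phi^{-2}\left\vert \partial\phi\right\vert ^{2}\phi_{\alpha}+\frac{1}{2}\phi^{-1}\left(  g-\phi\right)  \phi_{\alpha},
\]
and your version with $\overline{g}$ in place of $g$ is precisely what produced the spurious leftover $-\sqrt{-1}\phi^{-1}\phi_{0}\phi_{\alpha}$ that made the two sides ``not match'' in your third paragraph.

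With the corrected identification the verification closes immediately and no further reorganization is needed: since $g+\overline{g}=1+\phi+2\phi^{-1}\left\vert \partial\phi\right\vert ^{2}$ is real, adding $\frac{1}{2}\phi^{-1}\overline{g}\,\phi_{\alpha}$ gives
\[
-\phi E_{\alpha}+\frac{1}{2}\phi^{-1}\overline{g}\,\phi_{\alpha}=-\phi^{-1}\phi_{\alpha,\overline{\beta}}\phi_{\beta}+2\phi^{-2}\left\vert \partial\phi\right\vert ^{2}\phi_{\alpha}+\frac{1}{2}\phi^{-1}\phi_{\alpha},
\]
which is exactly the left-hand side you reduced the Lemma to; equivalently, the coefficient of $\phi_{\alpha}$ collects to $\frac{1}{2}\phi^{-1}\left(  g+\overline{g}\right)  -\frac{1}{2}\phi^{-1}g=\frac{1}{2}\phi^{-1}\overline{g}$, which is how the conjugate in (\ref{Ua}) arises (the imaginary parts cancel; they are not supposed to ``match'' term by term). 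As written, however, your proposal derives a false intermediate identity from the conjugation slip, observes the resulting mismatch, and then defers the decisive coefficient check (``must be checked to coincide'') without performing it, so it stops short of a proof. Fixing the single conjugation and carrying out the two-line collection above completes the argument, and this is then essentially the paper's own proof.
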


\begin{proof}
Replacing $\phi_{\alpha,\overline{\beta}}$ in (\ref{Ua1}) using identity
(\ref{Eab}) we obtain%
\begin{align*}
U_{\alpha}  &  =\phi^{-1}\left[  \sqrt{-1}\phi_{0,\alpha}+2\phi^{-2}\left\vert
\partial\phi\right\vert ^{2}\phi_{\alpha}+\phi D_{\alpha}+\frac{1}{2}\phi
^{-1}\phi_{\alpha}\right. \\
&  \left.  -E_{\alpha,\overline{\beta}}\phi_{\beta}-\phi^{-2}\left\vert
\partial\phi\right\vert ^{2}\phi_{\alpha}-\frac{1}{2}\left(  \frac{1}{2}%
\phi^{-1}-\frac{1}{2}+\phi^{-2}\left\vert \partial\phi\right\vert ^{2}%
+\sqrt{-1}\phi^{-1}\phi_{0}\right)  \phi_{\alpha}\right] \\
&  =\phi^{-1}\left[  \sqrt{-1}\phi_{0,\alpha}+\phi\left(  D_{\alpha}%
-E_{\alpha}\right)  +\frac{1}{2}\left(  \frac{1}{2}+\frac{1}{2}\phi^{-1}%
+\phi^{-2}\left\vert \partial\phi\right\vert ^{2}-\sqrt{-1}\phi^{-1}\phi
_{0}\right)  \phi_{\alpha}\right] \\
&  =\phi^{-1}\left[  \sqrt{-1}\phi_{0,\alpha}+\phi\left(  D_{\alpha}%
-E_{\alpha}\right)  +\frac{1}{2}\phi^{-1}\overline{g}\phi_{\alpha}\right]  .
\end{align*}

\end{proof}

\bigskip

As $D_{\alpha}=-\sqrt{-1}A_{\alpha\gamma}\phi^{-1}\phi_{\overline{\gamma}}$,
we have%
\begin{align}
D_{\alpha,\overline{\alpha}}  &  =-\sqrt{-1}A_{\alpha\gamma,\overline{\alpha}%
}\phi^{-1}\phi_{\overline{\gamma}}-\sqrt{-1}A_{\alpha\gamma}\phi^{-1}%
\phi_{\overline{\alpha}\overline{\gamma}}+\sqrt{-1}A_{\alpha\gamma}\phi
^{-2}\phi_{\overline{\alpha}}\phi_{\overline{\gamma}}\label{DD}\\
&  =\frac{m+2}{2}\phi^{-1}U_{\alpha}\phi_{\overline{\alpha}}+\left\vert
D_{\alpha\beta}\right\vert ^{2}-D_{\alpha\beta}\phi^{-2}\phi_{\overline
{\alpha}}\phi_{\overline{\beta}}.\nonumber\\
&  =\frac{m+2}{2}\phi^{-1}U_{\alpha}\phi_{\overline{\alpha}}+\left\vert
D_{\alpha\beta}\right\vert ^{2}-\phi^{-1}D_{\alpha}\phi_{\overline{\alpha}%
}.\nonumber
\end{align}
As $E_{\alpha}=\phi^{-1}\phi_{\gamma}E_{\alpha\overline{\gamma}}$, we have%
\begin{align}
E_{\alpha,\overline{\alpha}}  &  =\phi^{-1}\phi_{\gamma}E_{\alpha
\overline{\gamma},\overline{\alpha}}+\left(  \phi^{-1}\phi_{\gamma
,\overline{\alpha}}-\phi^{-2}\phi_{\overline{\alpha}}\phi_{\gamma}\right)
E_{\alpha\overline{\gamma}}\label{DE}\\
&  =\frac{1-m}{2}\phi^{-1}\phi_{\gamma}U_{\overline{\gamma}}+\left(  \phi
^{-1}\phi_{\gamma,\overline{\alpha}}-\phi^{-2}\phi_{\overline{\alpha}}%
\phi_{\gamma}\right)  E_{\alpha\overline{\gamma}}\nonumber\\
&  =\frac{1-m}{2}\phi^{-1}\phi_{\gamma}U_{\overline{\gamma}}+\left\vert
E_{\alpha\overline{\beta}}\right\vert ^{2}\nonumber
\end{align}
We now compute the left hand side of (\ref{JLE})
\begin{align}
LHS  &  =\operatorname{Re}\left[  gD_{\alpha,\overline{\alpha}}+\overline
{g}E_{\alpha,\overline{\alpha}}\right]  -3\phi_{0}\operatorname{Re}\sqrt
{-1}U_{\alpha,\overline{\alpha}}\label{LHS1}\\
&  +\operatorname{Re}\left[  g_{\overline{\alpha}}D_{\alpha}+\overline
{g}_{\overline{\alpha}}E_{\alpha}-3\sqrt{-1}\phi_{0,\overline{\alpha}%
}U_{\alpha}\right]  .\nonumber
\end{align}
By a Bianchi identity \cite[(2.13)]{Lee} we have $2\operatorname{Re}%
A_{\alpha\beta,\overline{\beta}\overline{\alpha}}=R_{0}=0$. Thus
\[
\operatorname{Re}\sqrt{-1}U_{\alpha,\overline{\alpha}}=\frac{2}{m+2}%
\operatorname{Re}A_{\alpha\beta,\overline{\beta}\overline{\alpha}}=0.
\]
Plugging this identity as well as (\ref{DD}) and (\ref{DE}) into (\ref{LHS1})
we obtain
\begin{align}
LHS  &  =\frac{3}{2}\phi^{-1}\operatorname{Re}gU_{\alpha}\phi_{\overline
{\alpha}}\label{LHS2}\\
&  +\left(  \frac{1}{2}+\frac{1}{2}\phi+\phi^{-1}\left\vert \partial
\phi\right\vert ^{2}\right)  \left(  \left\vert D_{\alpha\beta}\right\vert
^{2}+\left\vert E_{\alpha\overline{\beta}}\right\vert ^{2}\right)  -\phi
^{-1}\operatorname{Re}gD_{\alpha}\phi_{\overline{\alpha}}\nonumber\\
&  +\operatorname{Re}\left[  g_{\overline{\alpha}}D_{\alpha}+\overline
{g}_{\overline{\alpha}}E_{\alpha}-3\sqrt{-1}\phi_{0,\overline{\alpha}%
}U_{\alpha}\right]  .\nonumber
\end{align}

\bigskip

\begin{lemma}
\bigskip We have
\begin{align*}
\sqrt{-1}\phi_{0,\overline{\alpha}}  &  =\phi\left(  D_{\overline{\alpha}%
}-U_{\overline{\alpha}}-E_{\overline{\alpha}}\right)  +\frac{1}{2}\phi
^{-1}g\phi_{\overline{\alpha}},\\
g_{\overline{\alpha}}  &  =\phi^{-1}g\phi_{\overline{\alpha}}+\phi\left(
2D_{\overline{\alpha}}-U_{\overline{\alpha}}\right)  ,\\
\overline{g}_{\overline{\alpha}}  &  =\phi\left(  2E_{\overline{\alpha}%
}+U_{\overline{\alpha}}\right)  .
\end{align*}

\end{lemma}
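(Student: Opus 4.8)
The plan is to read off all three identities from the defining expression $g = \frac{1}{2} + \frac{1}{2}\phi + \phi^{-1}\left\vert \partial\phi\right\vert^{2} + \sqrt{-1}\phi_{0}$, the formula (\ref{Ua}) for $U_{\alpha}$, and the formulas (\ref{FD}), (\ref{FE}) for $D_{\alpha\beta}$ and $E_{\alpha\overline{\beta}}$; every quantity that appears is a first covariant derivative of a function built from $\phi$ and $\phi_{0}$, so the argument is entirely bookkeeping. The first identity comes from conjugating (\ref{Ua}). Since $\phi$ and $\phi_{0}$ are real, $\overline{\sqrt{-1}\phi_{0,\alpha}} = -\sqrt{-1}\phi_{0,\overline{\alpha}}$ and $\overline{\overline{g}} = g$, while by our conventions $\overline{D_{\alpha}} = D_{\overline{\alpha}}$, $\overline{E_{\alpha}} = E_{\overline{\alpha}}$, $\overline{U_{\alpha}} = U_{\overline{\alpha}}$; conjugating and solving for $\sqrt{-1}\phi_{0,\overline{\alpha}}$ gives the first formula.

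For $g_{\overline{\alpha}}$, I would differentiate $g$ in the $\overline{\alpha}$ direction, using $\left\vert \partial\phi\right\vert^{2} = \phi_{\gamma}\phi_{\overline{\gamma}}$ in the unitary frame; this produces the terms $\frac{1}{2}\phi_{\overline{\alpha}}$, $-\phi^{-2}\left\vert \partial\phi\right\vert^{2}\phi_{\overline{\alpha}}$, $\phi^{-1}\phi_{\gamma,\overline{\alpha}}\phi_{\overline{\gamma}}$, $\phi^{-1}\phi_{\gamma}\phi_{\overline{\gamma},\overline{\alpha}}$ and $\sqrt{-1}\phi_{0,\overline{\alpha}}$. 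I rewrite $\phi^{-1}\phi_{\gamma,\overline{\alpha}}$ with (\ref{FE}) and then use that $E_{\alpha\overline{\beta}}$ is Hermitian (this being exactly the purpose of the $\sqrt{-1}\phi^{-1}\phi_{0}$ term in its definition) to get $E_{\gamma\overline{\alpha}}\phi_{\overline{\gamma}} = \phi E_{\overline{\alpha}}$; I rewrite $\phi_{\overline{\gamma},\overline{\alpha}}$ by the commutation formula for functions, $\phi_{\overline{\gamma},\overline{\alpha}} = \phi_{\overline{\alpha},\overline{\gamma}} = \phi D_{\overline{\alpha}\overline{\gamma}}$ (the conjugate of (\ref{FD})), so $\phi^{-1}\phi_{\gamma}\phi_{\overline{\gamma},\overline{\alpha}} = \phi D_{\overline{\alpha}}$. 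Substituting the first identity for $\sqrt{-1}\phi_{0,\overline{\alpha}}$, the two $\phi^{-2}\left\vert \partial\phi\right\vert^{2}\phi_{\overline{\alpha}}$ terms cancel, the two $\phi E_{\overline{\alpha}}$ terms cancel, and the scalar coefficient of $\phi_{\overline{\alpha}}$ collapses to $\phi^{-1}g$ (using $\phi^{-1}g = \frac{1}{2}\phi^{-1} + \frac{1}{2} + \phi^{-2}\left\vert \partial\phi\right\vert^{2} + \sqrt{-1}\phi^{-1}\phi_{0}$), leaving $g_{\overline{\alpha}} = \phi^{-1}g\phi_{\overline{\alpha}} + \phi\left(2D_{\overline{\alpha}} - U_{\overline{\alpha}}\right)$.

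The third identity is then immediate: since $g - \overline{g} = 2\sqrt{-1}\phi_{0}$ we have $\overline{g}_{\overline{\alpha}} = g_{\overline{\alpha}} - 2\sqrt{-1}\phi_{0,\overline{\alpha}}$, so inserting the first two identities and simplifying gives $\overline{g}_{\overline{\alpha}} = \phi\left(2E_{\overline{\alpha}} + U_{\overline{\alpha}}\right)$. The only genuine computation is the middle step, and there the thing to get right is the bookkeeping of the scalar coefficient of $\phi_{\overline{\alpha}}$ together with the correct use of the commutation formulas for functions and of the Hermitian symmetry of $E_{\alpha\overline{\beta}}$; I expect no conceptual obstacle, just care.
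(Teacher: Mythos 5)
Your proposal is correct and follows essentially the same route as the paper: conjugating (\ref{Ua}) gives the first identity, differentiating $g$ and substituting (\ref{FD}), (\ref{FE}) and the first identity gives the second, and the third follows from the other two. The only cosmetic differences are that you invoke the Hermitian symmetry of $E_{\alpha\overline{\beta}}$ where the paper conjugates and uses the commutation identity $\phi_{\beta,\overline{\alpha}}=\phi_{\overline{\alpha},\beta}+\sqrt{-1}\,\delta_{\alpha\beta}\phi_{0}$ explicitly, and that you obtain $\overline{g}_{\overline{\alpha}}$ from $\overline{g}=g-2\sqrt{-1}\phi_{0}$ rather than repeating the calculation.
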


\begin{proof}
The first formula follows from (\ref{FE}) directly. We have%
\begin{align*}
g_{\overline{\alpha}}  &  =\frac{1}{2}\phi_{\overline{\alpha}}-\phi^{-2}%
\phi_{\overline{\alpha}}\left\vert \partial\phi\right\vert ^{2}+\phi
^{-1}\left(  \phi_{\beta\overline{\alpha}}\phi_{\overline{\beta}}+\phi_{\beta
}\phi_{\overline{\beta}\overline{\alpha}}\right)  +\sqrt{-1}\phi
_{0,\overline{\alpha}}\\
&  =\left(  \frac{1}{2}-\phi^{-2}\left\vert \partial\phi\right\vert
^{2}\right)  \phi_{\overline{\alpha}}+\phi D_{\overline{\alpha}}+\phi^{-1}%
\phi_{\beta\overline{\alpha}}\phi_{\overline{\beta}}+\sqrt{-1}\phi
_{0,\overline{\alpha}}.
\end{align*}
Since
\begin{align*}
\phi_{\beta\overline{\alpha}}\phi_{\overline{\beta}}  &  =\overline
{\phi_{\overline{\beta}\alpha}\phi_{\beta}}\\
&  =\overline{\left(  \phi_{\alpha\overline{\beta}}-\sqrt{-1}\phi_{0}%
\delta_{\alpha\beta}\right)  \phi_{\beta}}\\
&  =\overline{\phi_{\alpha\overline{\beta}}\phi_{\beta}}+\sqrt{-1}\phi_{0}%
\phi_{\overline{\alpha}}.
\end{align*}
Thus%
\[
g_{\overline{\alpha}}=\left(  \frac{1}{2}-\phi^{-2}\left\vert \partial
\phi\right\vert ^{2}+\sqrt{-1}\phi^{-1}\phi_{0}\right)  \phi_{\overline
{\alpha}}+\phi D_{\overline{\alpha}}+\phi^{-1}\overline{\phi_{\alpha
\overline{\beta}}\phi_{\beta}}+\sqrt{-1}\phi_{0,\overline{\alpha}}.
\]
Replacing $\phi^{-1}\phi_{\alpha,\overline{\beta}}$ by the formula we end up
with%
\begin{align*}
g_{\overline{\alpha}}  &  =\left(  \frac{1}{2}-\phi^{-2}\left\vert
\partial\phi\right\vert ^{2}+\sqrt{-1}\phi^{-1}\phi_{0}\right)  \phi
_{\overline{\alpha}}+\phi\left(  D_{\overline{\alpha}}+E_{\overline{\alpha}%
}\right)  +\sqrt{-1}\phi_{0,\overline{\alpha}}\\
&  +\overline{\phi^{-2}\left\vert \partial\phi\right\vert ^{2}\phi_{\alpha
}+\frac{1}{2}\left(  \frac{1}{2}\phi^{-1}-\frac{1}{2}+\phi^{-2}\left\vert
\partial\phi\right\vert ^{2}+\sqrt{-1}\phi^{-1}\phi_{0}\right)  \phi_{\alpha}%
}\\
&  =\frac{1}{2}\left(  \frac{1}{2}+\frac{1}{2}\phi^{-1}+\phi^{-2}\left\vert
\partial\phi\right\vert ^{2}+\sqrt{-1}\phi^{-1}\phi_{0}\right)  \phi
_{\overline{\alpha}}+\phi\left(  D_{\overline{\alpha}}+E_{\overline{\alpha}%
}\right)  +\sqrt{-1}\phi_{0,\overline{\alpha}}\\
&  =\frac{1}{2}\phi^{-1}g\phi_{\overline{\alpha}}+\phi\left(  D_{\overline
{\alpha}}+E_{\overline{\alpha}}\right)  +\sqrt{-1}\phi_{0,\overline{\alpha}}%
\end{align*}
By the same calculation%
\[
\overline{g}_{\overline{\alpha}}=\frac{1}{2}\phi^{-1}g\phi_{\overline{\alpha}%
}+\phi\left(  D_{\overline{\alpha}}+E_{\overline{\alpha}}\right)  -\sqrt
{-1}\phi_{0,\overline{\alpha}}.
\]
Plugging the first formula into the above identities, we obtain the second and
third formulas.
\end{proof}

\bigskip

\bigskip Plugging these formulas into (\ref{LHS2}) we obtain
\begin{align*}
LHS  &  =\frac{3}{2}\phi^{-1}\operatorname{Re}gU_{\alpha}\phi_{\overline
{\alpha}}+\left(  \frac{1}{2}+\frac{1}{2}\phi+\phi^{-1}\left\vert \partial
\phi\right\vert ^{2}\right)  \left(  \left\vert D_{\alpha\beta}\right\vert
^{2}+\left\vert E_{\alpha\overline{\beta}}\right\vert ^{2}\right) \\
&  -\phi^{-1}\operatorname{Re}gD_{\alpha}\phi_{\overline{\alpha}%
}+\operatorname{Re}\left[  \phi^{-1}g\phi_{\overline{\alpha}}+\phi\left(
2D_{\overline{\alpha}}-U_{\overline{\alpha}}\right)  \right]  D_{\alpha}\\
&  +\operatorname{Re}\left[  \phi\left(  2E_{\overline{\alpha}}+U_{\overline
{\alpha}}\right)  \right]  E_{\alpha}-3\operatorname{Re}\left[  \phi\left(
D_{\overline{\alpha}}-U_{\overline{\alpha}}-E_{\overline{\alpha}}\right)
+\frac{1}{2}\phi^{-1}g\phi_{\overline{\alpha}}\right]  U_{\alpha}\\
&  =\left(  \frac{1}{2}+\frac{1}{2}\phi+\phi^{-1}\left\vert \partial
\phi\right\vert ^{2}\right)  \left(  \left\vert D_{\alpha\beta}\right\vert
^{2}+\left\vert E_{\alpha\overline{\beta}}\right\vert ^{2}\right) \\
&  +\phi\operatorname{Re}\left(  2D_{\overline{\alpha}}-U_{\overline{\alpha}%
}\right)  D_{\alpha}+\phi\operatorname{Re}\left(  2E_{\overline{\alpha}%
}+U_{\overline{\alpha}}\right)  E_{\alpha}+3\phi\operatorname{Re}\left(
U_{\overline{\alpha}}+E_{\overline{\alpha}}-D_{\overline{\alpha}}\right)
U_{\alpha}.
\end{align*}
It is then elementary to show that this equals the RHS.

\section{Proof of the main theorem}

We are now ready to prove our main theorem.

\begin{theorem}
Let $\left(  M^{2m+1},\widetilde{\theta}\right)  $ be a closed Einstein
pseudohermitian manifold. Suppose $\theta=\phi\widetilde{\theta}$ is another
pseudohermitian structure with constant pseudohermitian scalar curvature. Then

\begin{itemize}
\item $\theta$ is Einstein as well;

\item furthermore $\phi$ must be constant unless $\left(  M^{2m+1}%
,\widetilde{\theta}\right)  $ is CR isometric to the standard sphere $\left(
\mathbb{S}^{2m+1},\theta_{c}\right)  $ up to a scaling and $\phi$ corresponds
to the following function on $\mathbb{S}^{2m+1}$%
\[
\phi\left(  z\right)  =c\left\vert \cosh t+\left(  \sinh t\right)
z\cdot\overline{\xi}\right\vert ^{-2}%
\]
for some $c>0,t\geq0$ and $\xi\in\mathbb{S}^{2m+1}$.
\end{itemize}
\end{theorem}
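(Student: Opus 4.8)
The plan is to extract everything from the Jerison--Lee identity of Proposition~\ref{JLI}. Begin with a normalization. Writing $\widetilde{\theta}=\phi^{-1}\theta$ and applying the scalar curvature law $\widetilde{R}=\phi R+(m+1)\Delta_{b}\phi-m(m+1)\phi^{-1}|\partial\phi|^{2}$ at the maximum and minimum of $\phi$ (and integrating over $M$ when $R=0$) forces $\phi$ to be constant whenever the constant scalar curvature of $\theta$ is non-positive, so in that case we are done and we may assume $R>0$. Since $\widetilde{\theta}$ is Einstein its (constant) scalar curvature is then also positive, and after rescaling $\widetilde{\theta}$ and $\theta$ (which replaces $\phi$ by a positive constant multiple) we may assume $R=\widetilde{R}=m(m+1)/2$. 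Now $\theta$ and $\widetilde{\theta}$ satisfy the hypotheses of Proposition~\ref{JLI}.

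Next, integrate the Jerison--Lee identity~(\ref{JLE}) over the closed manifold $M$ against $dv_{\theta}$. The left-hand side is of divergence form, hence integrates to zero, so $\int_{M}(\mathrm{RHS})\,dv_{\theta}=0$. Each term on the right is manifestly nonnegative, and the coefficient $\tfrac12+\tfrac12\phi$ of $|D_{\alpha\beta}|^{2}+|E_{\alpha\overline{\beta}}|^{2}$ is bounded below by $\tfrac12$; therefore $D_{\alpha\beta}\equiv0$ and $E_{\alpha\overline{\beta}}\equiv0$ on $M$, and then the remaining terms vanish automatically (since $D_{\alpha}$, $E_{\alpha}$ and $U_{\alpha}$ all vanish once $A_{\alpha\beta}\equiv0$ and $B_{\alpha\overline{\beta}}\equiv0$). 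As $D_{\alpha\beta}=-\sqrt{-1}A_{\alpha\beta}$ and $E_{\alpha\overline{\beta}}=-\tfrac1{m+2}B_{\alpha\overline{\beta}}$, the torsion of $\theta$ vanishes and $\theta$ is pseudo-Einstein; for $m\ge2$, Proposition~\ref{bi} then gives that $\theta$ has constant scalar curvature, hence $\theta$ is Einstein, while for $m=1$ the conclusion is immediate since $\theta$ is automatically pseudo-Einstein and has constant scalar curvature by hypothesis. This proves the first bullet.

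For the second bullet, feed $D_{\alpha\beta}=E_{\alpha\overline{\beta}}=0$ back into the transformation formulas to obtain an overdetermined system for $\phi$: from~(\ref{FA}), $\phi_{\alpha,\beta}=0$; from~(\ref{FE}), $\phi_{\alpha,\overline{\beta}}=\phi^{-1}\phi_{\alpha}\phi_{\overline{\beta}}+\tfrac12\bigl(\tfrac12-\tfrac12\phi+\phi^{-1}|\partial\phi|^{2}+\sqrt{-1}\phi_{0}\bigr)\delta_{\alpha\overline{\beta}}$; and, since $U_{\alpha}=0$, from~(\ref{Ua}), $\sqrt{-1}\phi_{0,\alpha}=-\tfrac12\phi^{-1}\overline{g}\,\phi_{\alpha}$. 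If $\phi$ is constant we are done; otherwise one must show that such a system admits a non-constant solution only when $(M,\widetilde{\theta})$ is, up to a scaling, CR isometric to $(\mathbb{S}^{2m+1},\theta_{c})$. This is the CR analogue of the rigidity in Obata's theorem, and here we follow the argument of Jerison and Lee: the equation $\phi_{\alpha,\beta}=0$ together with the trace-free part of the $(1,1)$-Hessian condition is extremely restrictive, and, analyzing it near a critical point of $\phi$ to pin down the constants and then propagating the information, one recovers on $M$ the structure equations of $\mathbb{S}^{2m+1}\subset\mathbb{C}^{m+1}$, hence a CR equivalence $M\cong\mathbb{S}^{2m+1}$. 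Once this is known, constancy of the scalar curvature of $\theta=\phi\widetilde{\theta}$ combined with the Jerison--Lee classification on the sphere (the theorem quoted above) forces $\phi$ to be of the stated form $c|\cosh t+(\sinh t)z\cdot\overline{\xi}|^{-2}$; alternatively the same conclusion can be reached by integrating the system directly.

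The main obstacle is this last step. The integration argument that yields $D_{\alpha\beta}=E_{\alpha\overline{\beta}}=0$, and hence the first bullet, is short once the identity of Proposition~\ref{JLI} is available; but passing from the overdetermined system for $\phi$ to the rigid statement ``$M$ is the sphere'' is the genuine CR Obata phenomenon, and it is here that essentially all the work of Section~3 (supported by the curvature identities of the appendix) is concentrated.
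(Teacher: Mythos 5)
Your argument for the first bullet is correct and coincides with the paper's: after disposing of the non-positive case and normalizing $R=\widetilde{R}=m\left(  m+1\right)  /2$, integrating (\ref{JLE}) over the closed manifold kills every (nonnegative) term on the right, so $D_{\alpha\beta}=E_{\alpha\overline{\beta}}=U_{\alpha}=0$ and $\theta$ is torsion-free and pseudo-Einstein with constant scalar curvature, hence Einstein. Your explicit maximum-principle treatment of the case of non-positive scalar curvature is fine (the paper simply calls it trivial), and the overdetermined system you extract from (\ref{FA}), (\ref{FE}) and (\ref{Ua}) is the correct one.

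The genuine gap is the second bullet, which you state but do not prove. The passage from the overdetermined system for $\phi$ to ``$\left(  M,\widetilde{\theta}\right)  $ is CR isometric to $\left(  \mathbb{S}^{2m+1},\theta_{c}\right)  $ up to scaling'' cannot be delegated to ``the argument of Jerison and Lee'': in \cite{JL2} the underlying manifold is the sphere from the outset, so their work contains no mechanism for producing the sphere from a general Einstein pseudohermitian manifold, and ``analyzing the system near a critical point and propagating the information'' is a gesture, not an argument. The paper's actual route consists of several steps absent from your proposal: set $u=\log\phi$ and prove $u$ is CR pluriharmonic (immediate from the $(1,1)$ equation when $m\geq2$, but needing an extra differentiation argument when $m=1$); use Proposition \ref{Rica} to see that the adapted metric $g_{\theta}$ is Einstein with $Ric\left(  g_{\theta}\right)  =\frac{m}{2}g_{\theta}$, so $\pi_{1}\left(  M\right)  $ is finite and one may pass to the universal cover, where $u$ admits a conjugate $v$ with $u+\sqrt{-1}v$ CR holomorphic; form $f=e^{u/2}\cos\left(  v/2\right)  -c$ with $\int f=0$ and show via Propositions \ref{crh} and \ref{Hess} that its Riemannian Hessian satisfies $D^{2}f=-\chi\,g_{\theta}$; apply the Obata-type Riemannian lemma to identify the cover with the round sphere and $f$ with a linear function; upgrade the Riemannian isometry to a CR isometry with $\left(  \mathbb{S}^{2m+1},\theta_{c}\right)  $ using that the Reeb field is Killing (torsion-freeness), and rule out a nontrivial deck group because $f$ is invariant; finally recover $\phi=\left\vert \lambda+az\cdot\overline{\xi}\right\vert ^{2}$. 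Note also that your plan to conclude by quoting the Jerison--Lee classification on the sphere requires precisely the isometric (not merely CR-equivalent) identification $F^{\ast}\theta_{c}=\lambda\widetilde{\theta}$; a bare CR diffeomorphism gives $F^{\ast}\theta_{c}=\psi\widetilde{\theta}$ with an unknown positive $\psi$, and the stated form of $\phi$ does not follow. Since you yourself acknowledge that essentially all of Section 3 is concentrated at this point, the proposal establishes the first bullet but leaves the second unproved.
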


The Theorem is trivial if the pseudohermitian scalar curvature\ of
$\widetilde{\theta}$ is zero or negative. Assume it is positive. By scaling
both $\theta$ and $\widetilde{\theta}$, we may assume $R=\widetilde
{R}=m\left(  m+1\right)  /2$. Integrating the Jerison-Lee identity (\ref{JLE})
over $M$ we have%
\begin{align*}
0  &  =\int_{M}\left(  \frac{1}{2}+\frac{1}{2}\phi\right)  \left(  \left\vert
D_{\alpha\beta}\right\vert ^{2}+\left\vert E_{\alpha\overline{\beta}%
}\right\vert ^{2}\right)  dv_{\theta}\\
&  +\int_{M}\phi\left[  \left\vert D_{\alpha}-U_{\alpha}\right\vert
^{2}+\left\vert U_{\alpha}+E_{\alpha}-D_{\alpha}\right\vert ^{2}+\left\vert
U_{\alpha}+E_{\alpha}\right\vert ^{2}+\left\vert \phi^{-1}\phi_{\overline
{\gamma}}D_{\alpha\beta}+\phi^{-1}\phi_{\beta}E_{\alpha\overline{\gamma}%
}\right\vert ^{2}\right]  dv_{\theta}.
\end{align*}
Therefore%
\[
D_{\alpha\beta}=0,E_{\alpha\overline{\beta}}=0,U_{\alpha}=0,
\]
i.e. more explicitly
\begin{align*}
\phi_{\alpha,\beta}  &  =0,\\
\phi_{\alpha,\overline{\beta}}  &  =\phi^{-1}\phi_{\alpha}\phi_{\overline
{\beta}}+\frac{1}{2}\left(  \frac{1}{2}-\frac{1}{2}\phi+\phi^{-1}\left\vert
\partial\phi\right\vert ^{2}+\sqrt{-1}\phi_{0}\right)  \delta_{\alpha
\overline{\beta}},\\
\phi_{0,\alpha}  &  =\frac{\sqrt{-1}}{2}\left(  \frac{1}{2}+\frac{1}{2}%
\phi^{-1}+\phi^{-2}\left\vert \partial\phi\right\vert ^{2}-\sqrt{-1}\phi
^{-1}\phi_{0}\right)  \phi_{\alpha}.
\end{align*}
Recall that $D_{\alpha\beta}=-\sqrt{-1}A_{\alpha\beta}$ and $E_{\alpha
\overline{\beta}}$ is a multiple of the trace-less Ricci by definition.
Therefore $\theta$ is pseudo-Einstein with constant scalar curvature and
torsion free. This proves the first part.

To prove the second part, we now assume that $\phi$ is not constant. First
observe that $\theta$ and $\widetilde{\theta}$ play symmetric roles in the
statement. Therefore it suffices to do it for $\theta$. As $R_{\alpha
\overline{\beta}}=\frac{m+1}{2}\delta_{\alpha\beta}$ and $A_{\alpha\beta}=0$,
\ it is easy to check by Proposition \ref{Rica} in the Appendix that the
adapted Riemannian metric $g_{\theta}$ is Einstein: $Ric\left(  g_{\theta
}\right)  =\frac{m}{2}g_{\theta}$. Since the Ricci curvature is positive, $M$
has a finite fundamental group. We can work on its universal covering
$\widetilde{M}$, which is still a closed pseudohermitian manifold. For
simplicity we will use the same letter for both the object on $M$ and its
pullback on $\widetilde{M}$. Let $u=\log\phi$. Then%
\begin{align*}
u_{\alpha,\beta}  &  =-u_{\alpha}u_{\beta},\\
u_{\alpha,\overline{\beta}}  &  =\frac{1}{2}\left(  \frac{1}{2}e^{-u}-\frac
{1}{2}+\left\vert \partial u\right\vert ^{2}+\sqrt{-1}u_{0}\right)
\delta_{\alpha\overline{\beta}},\\
u_{0,\alpha}  &  =-\frac{1}{2}u_{0}u_{\alpha}+\frac{\sqrt{-1}}{2}\left(
\frac{1}{2}+\frac{1}{2}e^{-u}+\left\vert \partial u\right\vert ^{2}\right)
u_{\alpha}.
\end{align*}
We now claim that $u$ is CR pluriharmonic. The argument is the same as in
\cite{JL2}. Indeed, when $m\geq2$ \ this follows from the 2nd equation. When
$m=1$, differentiating the 2nd equation and simplifying using all three yields%
\[
u_{\overline{1},11}=\frac{1}{2}\left(  -\frac{1}{2}e^{-u}u_{1}+u_{1,1}%
u_{\overline{1}}+u_{1}u_{\overline{1},1}-\sqrt{-1}u_{0,1}\right)  =0.
\]
As $A_{11}=0$, it follows that $u$ is CR pluriharmonic by \cite[Proposition
3.4]{Lee}. As $\widetilde{M}$ is simply connected, $u$ is the real part of a
CR holomorphic function $u+\sqrt{-1}v$:
\[
v_{\alpha}=-\sqrt{-1}u_{\alpha},v_{\overline{\beta}}=\sqrt{-1}u_{\overline
{\beta}}.
\]
We also have
\begin{align*}
\sqrt{-1}v_{0}\delta_{\alpha\beta}  &  =v_{\alpha,\overline{\beta}%
}-v_{\overline{\beta},\alpha}\\
&  =-\sqrt{-1}u_{\alpha,\overline{\beta}}-\sqrt{-1}u_{\overline{\beta},\alpha
}\\
&  =-2\sqrt{-1}u_{\alpha,\overline{\beta}}-u_{0}\delta_{\alpha\beta}\\
&  =-\sqrt{-1}\left(  \frac{1}{2}e^{-u}-\frac{1}{2}+\left\vert \partial
u\right\vert ^{2}\right)  \delta_{\alpha\overline{\beta}}%
\end{align*}
Thus
\[
v_{0}=-\left(  \frac{1}{2}e^{-u}-\frac{1}{2}+\left\vert \partial u\right\vert
^{2}\right)  .
\]
With this we can rewrite the equations satisfied by $u$ as%
\begin{align*}
u_{\alpha,\beta}  &  =-u_{\alpha}u_{\beta},\\
u_{\alpha,\overline{\beta}}  &  =\frac{1}{2}\left(  -v_{0}+\sqrt{-1}%
u_{0}\right)  \delta_{\alpha\overline{\beta}},\\
u_{0,\alpha}  &  =-\frac{1}{2}u_{0}u_{\alpha}+\frac{\sqrt{-1}}{2}\left(
1-v_{0}\right)  u_{\alpha}%
\end{align*}
Let $f=\exp u/2\cos v/2-c$, where $c$ is a constant such that $\int
_{\widetilde{M}}f=0$.

\begin{proposition}
\label{crh}We have%
\begin{align*}
f_{\alpha,\beta}  &  =0,\\
f_{\alpha,\overline{\beta}}  &  =\frac{1}{2}\left[  -\left(  e^{u/2}\sin
v/2\right)  _{0}+\sqrt{-1}f_{0}\right]  \delta_{\alpha\overline{\beta}},\\
f_{0,\alpha}  &  =\frac{\sqrt{-1}}{2}f_{\alpha},\\
f_{0,0}  &  =-\frac{1}{2}\left(  e^{u/2}\sin v/2\right)  _{0}.
\end{align*}

\end{proposition}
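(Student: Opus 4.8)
The plan is to repackage $u$ and its conjugate function $v$ into the single CR‑holomorphic function $w=u+\sqrt{-1}v$ and to study $F:=e^{w/2}$. Writing $F=e^{u/2}\left(\cos v/2+\sqrt{-1}\sin v/2\right)$, we have $f=\operatorname{Re}F-c=\frac{1}{2}(F+\overline{F})-c$ and $e^{u/2}\sin v/2=\operatorname{Im}F$. From $v_{\alpha}=-\sqrt{-1}u_{\alpha}$ and $v_{\overline{\beta}}=\sqrt{-1}u_{\overline{\beta}}$ one reads off $w_{\overline{\beta}}=0$, so $F$ is a CR function ($F_{\overline{\beta}}=0$), and $w_{\alpha}=2u_{\alpha}$, so that
\[
F_{\alpha}=Fu_{\alpha}=\tfrac{1}{2}Fw_{\alpha},\qquad F_{0}=\tfrac{1}{2}Fw_{0},\qquad w_{0}=u_{0}+\sqrt{-1}v_{0}.
\]
Since $\overline{F}$ is then conjugate‑CR we get $\overline{F}_{\alpha}=\overline{F_{\overline{\alpha}}}=0$, and because $\theta$ is torsion‑free (by the first part of the theorem $A_{\alpha\beta}=0$) the operators $\nabla_{0}$ and $\nabla_{\alpha}$ commute on functions; hence $\overline{F}_{\alpha,\beta}=\overline{F}_{\alpha,\overline{\beta}}=\overline{F}_{0,\alpha}=0$, while $f_{\alpha}=\frac{1}{2}F_{\alpha}$, $f_{0}=\operatorname{Re}F_{0}$, and $(e^{u/2}\sin v/2)_{0}=\operatorname{Im}F_{0}$. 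Thus each asserted identity reduces to computing one second covariant derivative of $F$ and taking $\frac{1}{2}(\,\cdot\,+\overline{\,\cdot\,})$ or $\operatorname{Re}(\,\cdot\,)$.

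Next I would rewrite the three equations for $u$ in terms of $w$: the first stays $u_{\alpha,\beta}=-u_{\alpha}u_{\beta}$; the second becomes $u_{\alpha,\overline{\beta}}=\frac{\sqrt{-1}}{2}w_{0}\,\delta_{\alpha\overline{\beta}}$ (using $-v_{0}+\sqrt{-1}u_{0}=\sqrt{-1}w_{0}$); the third becomes $u_{0,\alpha}=\frac{1}{2}(\sqrt{-1}-w_{0})u_{\alpha}$ (using $-u_{0}+\sqrt{-1}(1-v_{0})=\sqrt{-1}-w_{0}$). Using $F_{\overline{\beta}}=0$ and the product rule one then computes, in turn, $F_{\alpha,\beta}=F_{\beta}u_{\alpha}+Fu_{\alpha,\beta}=Fu_{\alpha}u_{\beta}-Fu_{\alpha}u_{\beta}=0$; next $F_{\alpha,\overline{\beta}}=Fu_{\alpha,\overline{\beta}}=\frac{\sqrt{-1}}{2}Fw_{0}\,\delta_{\alpha\overline{\beta}}=\sqrt{-1}F_{0}\,\delta_{\alpha\overline{\beta}}$; and, using $F_{0,\alpha}=F_{\alpha,0}$ by torsion‑freeness, $F_{0,\alpha}=F_{0}u_{\alpha}+Fu_{0,\alpha}=\frac{1}{2}Fw_{0}u_{\alpha}+\frac{1}{2}F(\sqrt{-1}-w_{0})u_{\alpha}=\frac{\sqrt{-1}}{2}Fu_{\alpha}=\frac{\sqrt{-1}}{2}F_{\alpha}$. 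For the remaining derivative I would differentiate this last identity in the $T_{\overline{\alpha}}$ direction and sum over $\alpha$: the right side gives $\frac{\sqrt{-1}}{2}F_{\alpha,\overline{\alpha}}=\frac{\sqrt{-1}}{2}\cdot\sqrt{-1}mF_{0}=-\frac{m}{2}F_{0}$, while on the left the $(\alpha,\overline{\alpha})$ commutation relation of the Appendix applied to the function $F_{0}$, together with $F_{0,\overline{\alpha}}=F_{\overline{\alpha},0}=0$, gives $F_{0,\alpha\overline{\alpha}}=\sqrt{-1}mF_{0,0}$; comparing yields $F_{0,0}=\frac{\sqrt{-1}}{2}F_{0}$.

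Finally, taking real and conjugate parts: $f_{\alpha,\beta}=\frac{1}{2}(F_{\alpha,\beta}+\overline{F}_{\alpha,\beta})=0$; then $f_{\alpha,\overline{\beta}}=\frac{1}{2}F_{\alpha,\overline{\beta}}=\frac{\sqrt{-1}}{2}F_{0}\,\delta_{\alpha\overline{\beta}}=\frac{1}{2}\left(-\operatorname{Im}F_{0}+\sqrt{-1}\operatorname{Re}F_{0}\right)\delta_{\alpha\overline{\beta}}=\frac{1}{2}\left(-(e^{u/2}\sin v/2)_{0}+\sqrt{-1}f_{0}\right)\delta_{\alpha\overline{\beta}}$; then $f_{0,\alpha}=\frac{1}{2}F_{0,\alpha}=\frac{\sqrt{-1}}{4}F_{\alpha}=\frac{\sqrt{-1}}{2}f_{\alpha}$; and $f_{0,0}=\operatorname{Re}F_{0,0}=\operatorname{Re}\left(\frac{\sqrt{-1}}{2}F_{0}\right)=-\frac{1}{2}\operatorname{Im}F_{0}=-\frac{1}{2}(e^{u/2}\sin v/2)_{0}$. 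I expect the main obstacle to be the computation of $F_{0,0}$: it is the one step that genuinely uses a curvature/torsion commutation identity rather than just the three first‑order equations for $u$, and some care is needed to check that all of the second derivatives of $\overline{F}$ that occur really do vanish — which they do, precisely because $\theta$ is torsion‑free.
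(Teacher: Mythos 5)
Your proposal is correct; I checked each step ($w_{\overline{\beta}}=0$, $F_{\alpha}=Fu_{\alpha}$, $F_{0}=\tfrac12 Fw_{0}$, the rewriting of the system for $u$ in terms of $w_{0}$, the product-rule computations, and the final commutation giving $F_{0,0}=\tfrac{\sqrt{-1}}{2}F_{0}$) and all signs come out right in the paper's conventions. The route differs from the paper's in a useful way: the paper works directly with the real function $f=e^{u/2}\cos(v/2)-c$, expanding derivatives of $e^{u/2}\cos(v/2)$ and $e^{u/2}\sin(v/2)$ term by term (its proof of the third identity is a half-page trigonometric computation), and then obtains $f_{0,0}$ by differentiating the third identity and invoking the commutation $f_{0,\alpha\overline{\beta}}=f_{0,\overline{\beta}\alpha}+\sqrt{-1}f_{0,0}\delta_{\alpha\overline{\beta}}$ together with the second identity. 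You instead complexify from the start, working with the single CR function $F=e^{(u+\sqrt{-1}v)/2}$, so that the four formulas collapse to $F_{\alpha,\beta}=0$, $F_{\alpha,\overline{\beta}}=\sqrt{-1}F_{0}\delta_{\alpha\overline{\beta}}$, $F_{0,\alpha}=\tfrac{\sqrt{-1}}{2}F_{\alpha}$, $F_{0,0}=\tfrac{\sqrt{-1}}{2}F_{0}$, each a one-line product-rule computation, with real parts taken only at the end; the last identity is obtained exactly as in the paper (differentiate the third, commute, use the second), except that you trace over $\alpha$ where the paper keeps free indices --- both are fine. Two small points: the commutation relation $h_{\alpha,\overline{\beta}}-h_{\overline{\beta},\alpha}=\sqrt{-1}\delta_{\alpha\overline{\beta}}h_{0}$ is not in the Appendix (which only has the connection, curvature, Hessian and Ricci comparisons); it is one of Lee's commutation identities, used elsewhere in the paper, so cite it as such. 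Also, when you assert $\overline{F}_{0,\alpha}=\overline{F}_{\alpha,0}$ and $u_{\alpha,0}=u_{0,\alpha}$ you are using $h_{\alpha,0}-h_{0,\alpha}=A_{\alpha\beta}h_{\overline{\beta}}$ with $A\equiv0$ from the first part of the theorem; you do say this, but it is worth flagging that this is where torsion-freeness genuinely enters (the vanishing of $\overline{F}_{\alpha,\beta}$ and $\overline{F}_{\alpha,\overline{\beta}}$, by contrast, needs nothing beyond $\overline{F}_{\alpha}\equiv0$).
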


\begin{proof}
These formula are proved by direct calculations. For example, to prove the 3rd
one we first observe as $v_{\alpha}=-\sqrt{-1}u_{\alpha}$ and $\theta$ is
torsion-free
\[
v_{0,\alpha}=v_{\alpha,0}=-\sqrt{-1}u_{\alpha,0}.
\]
Then we compute using the 3rd equation for $u$%
\begin{align*}
f_{0,\alpha}  &  =\frac{1}{2}e^{u/2}\left[  \left(  u_{0,\alpha}-\frac{1}%
{2}v_{0}v_{\alpha}+\frac{1}{2}u_{0}u_{\alpha}\right)  \cos\frac{v}{2}-\left(
v_{0,\alpha}+\frac{1}{2}u_{0}v_{\alpha}+\frac{1}{2}v_{0}u_{\alpha}\right)
\sin\frac{v}{2}\right] \\
&  =\frac{1}{2}e^{u/2}\left[  \left(  u_{0,\alpha}+\frac{\sqrt{-1}}{2}%
v_{0}u_{\alpha}+\frac{1}{2}u_{0}u_{\alpha}\right)  \cos\frac{v}{2}-\left(
-\sqrt{-1}u_{0,\alpha}-\frac{\sqrt{-1}}{2}u_{0}u_{\alpha}+\frac{1}{2}%
v_{0}u_{\alpha}\right)  \sin\frac{v}{2}\right] \\
&  =\frac{1}{2}e^{u/2}\left(  \frac{\sqrt{-1}}{2}u_{\alpha}\cos\frac{v}%
{2}+\frac{1}{2}u_{\alpha}\sin\frac{v}{2}\right) \\
&  =\frac{\sqrt{-1}}{2}e^{u/2}\left(  \frac{1}{2}u_{\alpha}\cos\frac{v}%
{2}-\frac{1}{2}v_{\alpha}\sin\frac{v}{2}\right) \\
&  =\frac{\sqrt{-1}}{2}f_{\alpha}.
\end{align*}
The 1st and 2nd formulas can be proved similarly.

To prove the last identity, we differentiate the 3rd one%
\begin{align*}
\frac{\sqrt{-1}}{2}f_{\alpha,\overline{\beta}}  &  =f_{0,\alpha\overline
{\beta}}\\
&  =f_{0,\overline{\beta}\alpha}+\sqrt{-1}f_{0,0}\delta_{\alpha\overline
{\beta}}\\
&  =\overline{f_{0,\beta\overline{\alpha}}}+\sqrt{-1}f_{0,0}\delta
_{\alpha\overline{\beta}}\\
&  =-\frac{\sqrt{-1}}{2}\overline{f_{\beta,\overline{\alpha}}}+\sqrt
{-1}f_{0,0}\delta_{\alpha\overline{\beta}}%
\end{align*}
Using the 2nd identity we obtain%
\[
f_{0,0}=-\frac{1}{2}\left(  e^{u/2}\sin\frac{v}{2}\right)  _{0}%
\]

\end{proof}

Let $D^{2}f$ denote the Hessian of $f$ w.r.t. the adapted Riemannian metric
$g_{\theta}$. By Proposition \ref{Hess} in Appendix, we obtain from
Proposition \ref{crh}%
\begin{equation}
D^{2}f=-\frac{1}{2}\left(  e^{u/2}\sin\frac{v}{2}\right)  _{0}g_{\theta}.
\label{hf}%
\end{equation}

We pause to prove a simple lemma in Riemannian geometry.

\begin{proposition}
Let $\left(  \Sigma^{n},g\right)  $ be a closed Riemannian manifold s.t.
$Ric\left(  g\right)  =\left(  n-1\right)  c^{2}g$ with $c>0$ a constant.
Suppose $u\in C^{\infty}\left(  \Sigma\right)  $ is a nonzero function s.t.
$\int_{\Sigma}u=0$ and
\begin{equation}
D^{2}u=-\chi g \label{hfchi}%
\end{equation}
for some $\chi\in C^{\infty}\left(  \Sigma\right)  $. Then $\left(  \Sigma
^{n},g\right)  $ is isometric to the unit sphere $\mathbb{S}^{n}$ in the
Euclidean space $\mathbb{R}^{n+1}$ with the metric $\frac{1}{c}g_{0}$ and $u$
corresponds to a linear function on $\mathbb{S}^{n}$, where $g_{0}$ is the
canonical metric on $\mathbb{S}^{n}$.
\end{proposition}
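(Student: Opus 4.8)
The plan is to upgrade the pointwise equation $(\ref{hfchi})$ to the classical Obata equation $D^{2}u=-c^{2}u\,g$ and then apply Obata's rigidity theorem. First I would take the trace of $(\ref{hfchi})$ to get $\Delta u=-n\chi$, so that $\chi=-\tfrac{1}{n}\Delta u$. Next, applying the standard commutation identity $\operatorname{div}\!\left(D^{2}u\right)=\nabla\!\left(\Delta u\right)+\operatorname{Ric}\!\left(\nabla u,\cdot\right)$, the left-hand side equals $-\nabla\chi$ by $(\ref{hfchi})$, the first term on the right equals $-n\nabla\chi$, and the Ricci term equals $\left(n-1\right)c^{2}\nabla u$ by hypothesis. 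Hence $\left(n-1\right)\nabla\chi=\left(n-1\right)c^{2}\nabla u$, so $\chi=c^{2}u+b$ for some constant $b$. Integrating $\Delta u=-n\left(c^{2}u+b\right)$ over $\Sigma$ and using $\int_{\Sigma}u=0$ forces $b=0$, so that
\[
D^{2}u=-c^{2}u\,g ,
\]
and $u$ is nonconstant (a nonzero constant cannot have zero mean).

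Since the Hessian of a function is unchanged when the metric is multiplied by a positive constant, replacing $g$ by $c^{2}g$ we may assume $c=1$, so that $\operatorname{Ric}\left(g\right)=\left(n-1\right)g$ and $D^{2}u=-u\,g$. From this last equation one computes $\nabla\!\left(\left\vert\nabla u\right\vert^{2}+u^{2}\right)=0$, hence $\left\vert\nabla u\right\vert^{2}+u^{2}\equiv a^{2}$ for a constant $a$; since $\int_{\Sigma}u=0$ and $u$ is nonconstant, $u$ changes sign, and evaluating at an interior maximum and an interior minimum (where $\nabla u$ vanishes) gives $a=\max u=-\min u>0$. Along any unit-speed geodesic $\gamma$ the function $t\mapsto u\!\left(\gamma(t)\right)$ solves $f''=-f$; taking $\gamma$ to emanate from the (necessarily unique) maximum point $N$ of $u$ shows $u=a\cos r$ with $r=d(\cdot,N)$, and analyzing $D^{2}r$ along these geodesics shows $g$ has the warped-product form $dr^{2}+\sin^{2}r\,g_{\mathbb{S}^{n-1}}$. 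This identifies $\left(\Sigma^{n},g\right)$ isometrically with the unit sphere $\mathbb{S}^{n}\subset\mathbb{R}^{n+1}$, with $u$ corresponding to $a$ times a height (hence linear) function; undoing the rescaling gives the statement, the model being the round sphere of radius $1/c$.

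The only substantive step is the last one, the Obata rigidity argument recognizing the isometry with the round sphere; Steps of the type "trace and differentiate" leading to $\chi=c^{2}u$ are routine. In fact, if one simply invokes Obata's classical sphere theorem once $D^{2}u=-c^{2}u\,g$ has been established, the entire proof reduces to the short computation in the first paragraph together with the constant rescaling of the metric.
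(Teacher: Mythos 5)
Your proposal is correct and follows essentially the same route as the paper: take the trace, use the commutation identity $\operatorname{div}(D^{2}u)=\nabla(\Delta u)+Ric(\nabla u,\cdot)$ (the paper writes this in an orthonormal frame as $u_{ji,j}=u_{jj,i}+R_{il}u_{l}$) to get $\chi=c^{2}u$ from the mean-zero condition, and then conclude via Obata's classical theorem, which the paper simply cites; your additional sketch of Obata's rigidity argument is a correct expansion of that citation rather than a different method.
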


\begin{proof}
Taking trace of (\ref{hfchi}) yields $\Delta u=-n\chi$. Working with a local
orthonormal frame we differentiate (\ref{hfchi})%
\begin{align*}
-\chi_{i}  &  =u_{ji,j}\\
&  =u_{jj,i}+R_{ijlj}u_{l}\\
&  =\left(  \Delta u\right)  _{i}+R_{il}u_{l}\\
&  =-n\chi_{i}+\left(  n-1\right)  c^{2}u_{i}.
\end{align*}
Thus $\chi_{i}-c^{2}u_{i}=0$ or $\chi-c^{2}u$ is constant. Since $\int
_{\Sigma}u=0$, we have $\chi=c^{2}u$. Therefore $D^{2}u=-c^{2}ug$. The
proposition then follows from the classic Obata theorem \cite{O1}.
\end{proof}

Since $\left(  \widetilde{M},g_{\theta}\right)  $ is Einstein with $Ric\left(
g_{\theta}\right)  =\frac{m}{2}g$ and $f$ satisfies (\ref{hf}), applying the
above Proposition we conclude that $\left(  \widetilde{M},g_{\theta}\right)  $
is isometric to $\left(  \mathbb{S}^{2m+1},4g_{0}\right)  $ and $\ f$
corresponds to a linear function on $\mathbb{S}^{2m+1}$. By an argument in
\cite{LW}$\left(  \widetilde{M},\theta\right)  $ is in fact CR isometric to
$\left(  \mathbb{S}^{2m+1},\theta_{c}\right)  $. For completeness, we repeat
the proof here. Without loss of generality, we can take $(\widetilde
{M},g_{\theta})$ to be $\left(  \mathbb{S}^{2m+1},4g_{0}\right)  $. Then
$\theta$ is a pseudohermitian structure on $\mathbb{S}^{2m+1}$ whose adapted
metric is $4g_{0}$ and the associated Tanaka-Webster connection is
torsion-free. It is a well known fact that the Reeb vector field $T$ is then a
Killing vector field for $g_{0}$ . Therefore there exists a skew-symmetric
matrix $A$ such that for all $X\in\mathbb{S}^{2m+1},T(X)=AX$, here we use the
obvious identification between $z=(z_{1},\ldots,z_{m+1})\in\mathbb{C}^{m+1}$
and $X=(x_{1},y_{1},\ldots,x_{m+1},y_{m+1})\in\mathbb{R}^{2m+2}$. Changing
coordinates by an orthogonal transformation we can assume that $A$ is of the
following form
\[
A=\left[
\begin{array}
[c]{ccc}%
\begin{array}
[c]{cc}%
0 & a_{1}\\
a_{1} & 0
\end{array}
&  & \\
& \ddots & \\
&  &
\begin{array}
[c]{cc}%
0 & a_{m+1}\\
a_{m+1} & 0
\end{array}
\end{array}
\right]
\]
where $a_{i}\geq0$. Therefore
\[
T=\sum_{i}a_{i}\left(  y_{i}\frac{\partial}{\partial x_{i}}-x_{i}%
\frac{\partial}{\partial y_{i}}\right)
\]
Since $T$ is of unit length we must have
\[
4\sum_{i}a_{i}^{2}(x_{i}^{2}+y_{i}^{2})=1
\]
on $\mathbb{S}^{2m+1}$. Therefore all the $a_{i}$'s are equal to $1/2$. It
follows that
\[
\theta=g_{0}(T,\cdot)=2\sqrt{-1}\overline{\partial}|z|^{2}.
\]
Therefore $\left(  \widetilde{M},\theta\right)  $ is CR isometric to $\left(
\mathbb{S}^{2m+1},\theta_{c}\right)  $.

Take $\left(  \widetilde{M},\theta\right)  $ to be $\left(  \mathbb{S}%
^{2m+1},\theta_{c}\right)  $. Then there exists a unit $\xi\in\mathbb{C}%
^{m+1}$ and $a>0$ s.t.
\[
f\left(  z\right)  =a\operatorname{Re}z\cdot\overline{\xi}.
\]

Now $M=\mathbb{S}^{2m+1}/\Gamma$, where $\Gamma\subset U\left(  m+1\right)  $
is a finite group acting on $\mathbb{S}^{2m+1}$ freely. Since $f$ must be
invariant under $\Gamma$, it is easy to see that $\Gamma$ must be trivial.
Finally, we have%
\begin{align*}
\operatorname{Re}e^{u/2+\sqrt{-1}v/2}  &  =\exp u/2\cos v/2\\
&  =c+a\operatorname{Re}z\cdot\overline{\xi}\\
&  =\operatorname{Re}\left(  c+az\cdot\overline{\xi}\right)  .
\end{align*}
Thus $e^{u/2+\sqrt{-1}v/2}=\lambda+az\cdot\overline{\xi}$ with $\lambda
=c+\sqrt{-1}c^{\prime}$ for some $c^{\prime}\in\mathbb{R}$. Then
\[
\phi=e^{u}=\left\vert \lambda+az\cdot\overline{\xi}\right\vert ^{2}.
\]
This finishes the proof.

\section{Appendix}

In this appendix, we collect some of the formulas in CR geometry used in the
proof of the main theorem.

Let $\left(  M^{2m+1},\theta\right)  $ be pseudohermitian manifold and
$\nabla$ the Tanaka-Webster connection. Let $\widehat{\nabla}$ be the
Levi-Civita connection of the adapted Riemannian metric $g_{\theta}$. The
following two propositions can be found for example in \cite{DT} in equivalent forms.

\begin{proposition}
\label{connd}We have%
\begin{align*}
\widehat{\nabla}_{X}Y  &  =\nabla_{X}Y+\theta\left(  Y\right)  AX+\frac{1}%
{2}\left(  \theta\left(  Y\right)  \phi X+\theta\left(  X\right)  \phi
Y\right) \\
&  -\left[  \left\langle AX,Y\right\rangle +\frac{1}{2}\omega\left(
X,Y\right)  \right]  T.
\end{align*}

\end{proposition}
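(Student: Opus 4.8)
The plan is to derive the formula from the two structural properties of the Tanaka--Webster connection $\nabla$ --- that it is metric for the adapted metric $g_\theta=\langle\cdot,\cdot\rangle$, and that its torsion $T_{\nabla}$ is explicitly known --- together with the universal algebra relating any metric connection to the Levi--Civita connection of the same metric. There is no analytic content: $\widehat{\nabla}-\nabla$ is a tensor, so the identity may be checked pointwise.

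First I would record $T_{\nabla}$. With respect to the $g_\theta$-orthogonal splitting $TM=H(M)\oplus\mathbb{R}T$, the Tanaka--Webster torsion is
\[
T_{\nabla}(X,Y)=\omega(X,Y)\,T,\qquad T_{\nabla}(T,X)=AX,\qquad T_{\nabla}(T,T)=0
\]
for $X,Y\in H(M)$, where $A$ is the pseudohermitian torsion --- a horizontal, $g_\theta$-symmetric endomorphism --- and the signs, together with the normalization of $\omega$ relative to $\phi$ and $d\theta$, are those fixed earlier in the paper. Next I would invoke the contorsion identity: for any $g_\theta$-metric connection one has $\widehat{\nabla}_XY=\nabla_XY-K(X,Y)$, where the contorsion $K$ is determined pointwise by
\[
2\langle K(X,Y),Z\rangle=\langle T_{\nabla}(X,Y),Z\rangle-\langle T_{\nabla}(Y,Z),X\rangle+\langle T_{\nabla}(Z,X),Y\rangle .
\]
This is just the Koszul formula for $g_\theta$ after substituting $[X,Y]=\nabla_XY-\nabla_YX-T_{\nabla}(X,Y)$ and using metricity of $\nabla$, and I would verify it in a line. (One could instead compute the Christoffel symbols of $\widehat\nabla$ directly in a local frame $\{T,T_\alpha,\overline{T_\alpha}\}$ adapted to $\theta$, but the coordinate-free route is cleaner.)

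The remaining step is substitution and bookkeeping. Writing $X=X^{H}+\theta(X)T$ and likewise for $Y$ and $Z$, I would plug $T_{\nabla}$ into the displayed identity and sort the output by type. The insertions of $T_{\nabla}(T,\cdot)=A(\cdot)$ --- which occur exactly when a $T$-component meets the formula --- assemble into the term $\theta(Y)AX$ together with the $T$-valued term $-\langle AX,Y\rangle T$, using that $A$ is $g_\theta$-symmetric and horizontal; the insertions of $T_{\nabla}(\cdot,\cdot)=\omega(\cdot,\cdot)T$ assemble into the symmetrized horizontal term $\tfrac12\big(\theta(Y)\phi X+\theta(X)\phi Y\big)$ --- the replacement of $\omega(X,Z)$ by $\langle\phi X,Z\rangle$ being the compatibility of $\phi$ with the contact structure --- together with the $T$-valued term $-\tfrac12\omega(X,Y)T$; and all remaining combinations drop out, since $\theta(T)$ is constant and $\omega$, $A$ and $\phi$ annihilate $T$. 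I expect the main obstacle to be convention matching: one must fix the normalization of $g_\theta$ (chosen in this paper so that the adapted metric of $\theta_c$ on $\mathbb{S}^{2m+1}$ is the round metric $4g_0$), the normalization of $\omega$ against $d\theta$ and $\phi$, and the sign of $A$ in $T_{\nabla}(T,\cdot)$, so that the coefficients $1,\tfrac12,\tfrac12,-1,-\tfrac12$ in the statement come out exactly; with other choices the same computation reproduces the equivalent forms recorded in \cite{DT}.
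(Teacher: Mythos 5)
Your proposal is correct, and in fact the paper contains no proof of this proposition at all: it is stated with a pointer to Dragomir--Tomassini \cite{DT}, ``in equivalent forms,'' so your self-contained derivation is a genuine addition rather than a retracing. The route you choose is the standard and right one: since $\nabla$ is metric for $g_\theta$, the difference $\widehat{\nabla}-\nabla$ is the contorsion tensor determined purely by the torsion of $\nabla$, and your displayed identity
$2\langle K(X,Y),Z\rangle=\langle T_{\nabla}(X,Y),Z\rangle-\langle T_{\nabla}(Y,Z),X\rangle+\langle T_{\nabla}(Z,X),Y\rangle$, with $\widehat{\nabla}_XY=\nabla_XY-K(X,Y)$, is the correct consequence of the Koszul formula with the sign conventions you use. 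I checked the bookkeeping: writing $T_{\nabla}(X,Y)=\omega(X^H,Y^H)\,T+\theta(X)AY-\theta(Y)AX$ and using that $A$ is horizontal and $g_\theta$-symmetric and that $\omega(X,Z)=\langle\phi X,Z\rangle$ with $\phi$ skew, the three torsion insertions sort exactly as you say (in particular the would-be $\theta(X)AY$ contribution cancels by symmetry of $A$), yielding $\theta(Y)AX+\tfrac12\bigl(\theta(Y)\phi X+\theta(X)\phi Y\bigr)-\bigl[\langle AX,Y\rangle+\tfrac12\omega(X,Y)\bigr]T$ on the nose. The only inaccuracy is your remark that the normalizations of $\omega$, $\phi$ and $A$ are ``fixed earlier in the paper'': they are not --- $\omega$ and $\phi$ appear only in this proposition and are left to the reader's background (here $\phi$ is $J$ extended by $\phi T=0$, $\omega(X,Y)=\langle\phi X,Y\rangle$ compatibly with $d\theta=\sqrt{-1}\sum_\alpha\theta^\alpha\wedge\overline{\theta^\alpha}$ and $|T|=1$) --- so in a written version you would need to state these conventions yourself, and the internal consistency check against Propositions \ref{Hess} and \ref{cuv} (e.g.\ the coefficients $-\tfrac{\sqrt{-1}}{2}u_0$ and $\tfrac14\langle X,Y\rangle$) is the right way to pin them down. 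With that caveat addressed, your argument is complete.
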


With this formula, one can compare the curvature tensor $R$ of $\nabla$ and
the curvature tensor $\widehat{R}$ of $\widehat{\nabla}$.

\begin{proposition}
\label{cuv}Suppose $X,Y$ are horizontal vector fields, then%
\begin{align*}
\widehat{R}\left(  X,Y,X,Y\right)   &  =R\left(  X,Y,X,Y\right)  -\frac{3}%
{4}\left\langle JX,Y\right\rangle ^{2}+\left\langle AX,Y\right\rangle
^{2}-\left\langle AX,X\right\rangle \left\langle AY,Y\right\rangle ,\\
\widehat{R}\left(  X,T,Y,T\right)   &  =-\left\langle \nabla_{T}%
AX,Y\right\rangle -\left\langle AX,AY\right\rangle +\left\langle
AX,JY\right\rangle +\frac{1}{4}\left\langle X,Y\right\rangle ,\\
\widehat{R}\left(  X,Y,Z,T\right)   &  =\left\langle \nabla_{X}%
AY,Z\right\rangle -\left\langle \nabla_{Y}AX,Z\right\rangle .
\end{align*}

\end{proposition}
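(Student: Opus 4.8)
The plan is to treat the Levi-Civita connection of $g_{\theta}$ as a tensorial perturbation of the Tanaka-Webster connection and push that perturbation through the definition of curvature. Writing $\widehat{\nabla}=\nabla+S$, Proposition \ref{connd} identifies the difference tensor as
\[
S(X,Y)=\theta(Y)AX+\tfrac{1}{2}\bigl(\theta(Y)\phi X+\theta(X)\phi Y\bigr)-\bigl[\langle AX,Y\rangle+\tfrac{1}{2}\omega(X,Y)\bigr]T,
\]
where $\phi$ is the structure tensor $J$ of the statement. Before computing I would record the structural facts to be used: $\nabla$ is a metric connection with $\nabla g_{\theta}=\nabla\theta=\nabla T=\nabla\phi=0$, so the only nonparallel object entering is the pseudohermitian torsion $A$; moreover $\phi$ preserves $H$ with $\phi T=0$ and $\phi^{2}=-I$ on $H$, $A$ is symmetric, trace-free and anticommutes with $\phi$, and $\iota_{T}d\theta=0$. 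Crucially, unlike in the Riemannian case, $\nabla$ itself carries torsion $\operatorname{Tor}(X,Y)=\omega(X,Y)T$ for $X,Y\in H$ and $\operatorname{Tor}(T,X)=AX$, which is exactly the antisymmetric part $S(X,Y)-S(Y,X)$.

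The next step is the general curvature-comparison identity. Expanding $\widehat{R}(X,Y)Z=\widehat{\nabla}_{X}\widehat{\nabla}_{Y}Z-\widehat{\nabla}_{Y}\widehat{\nabla}_{X}Z-\widehat{\nabla}_{[X,Y]}Z$ with $\widehat{\nabla}=\nabla+S$ gives
\[
\widehat{R}(X,Y)Z=R(X,Y)Z+(\nabla_{X}S)(Y,Z)-(\nabla_{Y}S)(X,Z)+S\bigl(X,S(Y,Z)\bigr)-S\bigl(Y,S(X,Z)\bigr)+S\bigl(\operatorname{Tor}(X,Y),Z\bigr).
\]
The last term is the sole deviation from the standard torsion-free formula; it arises from $\nabla_{X}Y-\nabla_{Y}X-[X,Y]=\operatorname{Tor}(X,Y)$, and I would keep it explicit since it contributes a genuine part of the answer rather than cancelling.

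I would then evaluate the three components by specializing the arguments to be horizontal or equal to $T$, which annihilates most pieces of $S$ through $\theta(X)=0$, $\phi T=0$ and $AT=0$. For the fully horizontal component $\widehat{R}(X,Y,X,Y)$ the derivative terms $(\nabla S)$ are $T$-valued after inserting horizontal arguments and so drop out upon pairing with horizontal vectors; the algebraic content then comes from the quadratic terms $S\bigl(X,S(Y,Z)\bigr)-S\bigl(Y,S(X,Z)\bigr)$ together with $S(\operatorname{Tor}(X,Y),Y)=\omega(X,Y)S(T,Y)=\tfrac{1}{2}\omega(X,Y)\phi Y$, producing the $-\tfrac{3}{4}\langle\phi X,Y\rangle^{2}$ term and the quadratic torsion expression $\langle AX,Y\rangle^{2}-\langle AX,X\rangle\langle AY,Y\rangle$. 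For $\widehat{R}(X,T,Y,T)$ the Reeb slots force $\theta$-factors to survive, so now it is the derivative term $-(\nabla_{T}S)(X,Y)$, paired with $T$ and using $\nabla\omega=0$, that supplies $-\langle\nabla_{T}AX,Y\rangle$, while the quadratic and torsion pieces give $-\langle AX,AY\rangle+\langle AX,\phi Y\rangle+\tfrac{1}{4}\langle X,Y\rangle$. For the mixed component $\widehat{R}(X,Y,Z,T)$ the quadratic and torsion contributions are all horizontal and vanish against $T$, leaving only $(\nabla_{X}S)(Y,Z)-(\nabla_{Y}S)(X,Z)$, which reduce to $\langle\nabla_{X}AY,Z\rangle-\langle\nabla_{Y}AX,Z\rangle$.

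The routine but delicate part, and the main obstacle, is the bookkeeping: tracking the normalization constants relating $\omega$, the Levi form and $\langle\phi\,\cdot,\cdot\rangle$ that turn into the coefficients $\tfrac{3}{4}$ and $\tfrac{1}{4}$, fixing the overall sign convention for $R(X,Y)Z$ and for $A$, and ensuring the torsion term $S(\operatorname{Tor},\cdot)$ is combined with the quadratic $S$-terms rather than double-counted. I would pin these constants down once through the definition of $g_{\theta}$ and the normalization $d\theta=\sqrt{-1}\sum_{\alpha}\theta^{\alpha}\wedge\overline{\theta^{\alpha}}$, and sanity-check them on $(\mathbb{S}^{2m+1},\theta_{c})$, where $A=0$ and $\widehat{R}$ is the constant-curvature tensor of the round metric, before carrying the general case through symbolically.
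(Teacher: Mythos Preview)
Your approach is sound and is the standard route to these identities: write $\widehat{\nabla}=\nabla+S$ with $S$ read off from Proposition~\ref{connd}, use the curvature-comparison formula with the extra $S(\operatorname{Tor}(X,Y),Z)$ term forced by the Tanaka--Webster torsion, and then specialize the slots. The structural facts you list ($\nabla T=\nabla\phi=\nabla\omega=0$, $A$ symmetric and anticommuting with $\phi$, $\iota_T d\theta=0$) are exactly what make the computation collapse case by case as you describe. One small slip: since $\widehat{\nabla}$ is torsion-free, the relation is $\operatorname{Tor}^{\nabla}(X,Y)=-\bigl(S(X,Y)-S(Y,X)\bigr)$, not equality as you wrote; this only affects a sign you would catch in the bookkeeping anyway.

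As for comparison with the paper: there is nothing to compare. The paper does not prove Proposition~\ref{cuv}; it states it in the Appendix and refers the reader to Dragomir--Tomassini \cite{DT} for equivalent formulas. Your outline is precisely the kind of direct computation that underlies the result there, so in effect you are supplying the proof the paper only cites.
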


Using Proposition \ref{connd}, it is easy to check by direct calculation the following

\begin{proposition}
\label{Hess}Let $u\in C^{\infty}\left(  M\right)  $ and $D^{2}u$ be its
Riemannian Hessian w.r.t. $g_{\theta}$. We have the following formulas%
\begin{align*}
&  D^{2}u\left(  T,T\right)  =u_{0,0},\\
&  D^{2}u\left(  T,T_{\alpha}\right)  =u_{\alpha,0}-\frac{\sqrt{-1}}%
{2}u_{\alpha},\\
&  D^{2}u\left(  T_{\alpha},T_{\beta}\right)  =u_{a,\beta}+A_{\alpha\beta
}u_{0}\\
&  D^{2}u\left(  T_{\alpha},T_{\overline{\beta}}\right)  =u_{\alpha
,\overline{\beta}}-\frac{\sqrt{-1}}{2}\delta_{\alpha\beta}u_{0}%
\end{align*}

\end{proposition}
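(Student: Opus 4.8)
The plan is to read $D^2u$ directly off the comparison of connections in Proposition \ref{connd}. Recall that for the Levi-Civita connection $\widehat{\nabla}$ the Riemannian Hessian is $D^2u(X,Y)=X(Yu)-(\widehat{\nabla}_XY)u$, while the Tanaka-Webster second covariant derivatives are $u_{\bullet,\bullet}=X(Yu)-(\nabla_XY)u$ for the appropriate pair of frame vectors. Subtracting the two, for any $X,Y$ we obtain
\[
D^2u(X,Y)=\big(X(Yu)-(\nabla_XY)u\big)-\big(\widehat{\nabla}_XY-\nabla_XY\big)u ,
\]
so the entire content of the proposition is to evaluate the correction vector $\widehat{\nabla}_XY-\nabla_XY$ given by Proposition \ref{connd} on each of the four pairs $(T,T)$, $(T,T_{\alpha})$, $(T_{\alpha},T_{\beta})$, $(T_{\alpha},T_{\overline{\beta}})$ and apply it to $u$, using $Tu=u_0$, $T_{\gamma}u=u_{\gamma}$ and $T_{\overline{\gamma}}u=u_{\overline{\gamma}}$.

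To carry this out I would first record the elementary values of the tensors entering Proposition \ref{connd} on the frame: $\theta(T)=1$ and $\theta(T_{\alpha})=\theta(T_{\overline{\alpha}})=0$; the structure endomorphism (in the appendix denoted $\phi$, extending $J$) satisfies $\phi T=0$ and $\phi T_{\alpha}=\sqrt{-1}T_{\alpha}$; the torsion endomorphism satisfies $AT=0$ and maps $T^{1,0}$ into $T^{0,1}$; and, from $d\theta=\sqrt{-1}\sum_{\gamma}\theta^{\gamma}\wedge\overline{\theta^{\gamma}}$, the only nonzero frame value of $\omega$ is $\omega(T_{\alpha},T_{\overline{\beta}})$, proportional to $\sqrt{-1}\delta_{\alpha\beta}$. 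Feeding these in: the $(T,T)$ correction vanishes, giving $D^2u(T,T)=u_{0,0}$; in the $(T,T_{\alpha})$ case only the term $\tfrac12\phi T_{\alpha}=\tfrac{\sqrt{-1}}{2}T_{\alpha}$ survives, giving $u_{\alpha,0}-\tfrac{\sqrt{-1}}{2}u_{\alpha}$; in the $(T_{\alpha},T_{\beta})$ case the $\phi$- and $\theta$-terms drop out and only the $T$-component through $\langle AT_{\alpha},T_{\beta}\rangle$ remains, producing the torsion term $A_{\alpha\beta}u_0$; and in the $(T_{\alpha},T_{\overline{\beta}})$ case only the $\omega$-term survives, producing $-\tfrac{\sqrt{-1}}{2}\delta_{\alpha\beta}u_0$.

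The main point requiring care is the bookkeeping of normalizations and signs, rather than any conceptual difficulty. The two delicate steps I expect are: (i) identifying $\langle AT_{\alpha},T_{\beta}\rangle$ with $A_{\alpha\beta}$, which uses both that $AT_{\alpha}\in T^{0,1}$ and the precise normalization of the adapted metric $g_{\theta}$ on the Levi distribution, and which is responsible for the exact coefficient of $u_0$ in the $(T_{\alpha},T_{\beta})$ formula; and (ii) fixing the sign and the factor $\tfrac12$ in the $\omega$-term so that the $(T_{\alpha},T_{\overline{\beta}})$ correction comes out as $-\tfrac{\sqrt{-1}}{2}\delta_{\alpha\beta}u_0$ and not its negative, which amounts to matching the convention for $\omega=d\theta$ built into Proposition \ref{connd}. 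A useful consistency check throughout is that $D^2u$ must be symmetric: the symmetry of $A_{\alpha\beta}$ handles the $(T_{\alpha},T_{\beta})$ entry, while the Tanaka-Webster commutation relation $u_{\alpha,\overline{\beta}}-u_{\overline{\beta},\alpha}=\sqrt{-1}\delta_{\alpha\beta}u_0$ reconciles the mixed entries with their connection corrections, confirming that the computed $D^2u$ is indeed symmetric.
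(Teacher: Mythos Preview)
Your proposal is correct and is exactly the approach the paper indicates: the paper simply says that Proposition~\ref{Hess} ``is easy to check by direct calculation'' using Proposition~\ref{connd}, and what you have written is precisely that direct calculation, carried out on each pair of frame vectors with the correct attention to the torsion, $\phi$, and $\omega$ terms. Your consistency check via the commutation relation $u_{\alpha,\overline{\beta}}-u_{\overline{\beta},\alpha}=\sqrt{-1}\,\delta_{\alpha\beta}u_0$ is also the right way to reconcile the index ordering in the mixed case.
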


\bigskip

Taking trace using Proposition \ref{cuv} yields the Ricci curvature $Ric$ of
$g_{\theta}$ in terms of the pseudohermitian Ricci tensor $R_{\alpha
\overline{\beta}}$ and the torsion $A$.

\begin{proposition}
\label{Rica}Suppose $X=2\operatorname{Re}\sum_{\alpha=1}^{m}c_{\alpha
}T_{\alpha}$. We have%
\begin{align*}
Ric\left(  X,X\right)   &  =2R_{\alpha\overline{\beta}}c_{\alpha}%
\overline{c_{\beta}}+\sqrt{-1}\left(  m-1\right)  \left(  A_{\alpha\beta
}c_{\alpha}c_{\beta}-A_{\overline{\alpha}\overline{\beta}}\overline{c_{\alpha
}}\overline{c_{\beta}}\right)  \\
&  -\frac{1}{2}\left\vert X\right\vert ^{2}-\left\langle \nabla_{T}%
AX,X\right\rangle +\left\langle AX,JX\right\rangle ,\\
Ric\left(  X,T\right)   &  =2\left\langle X,\operatorname{Re}A_{\alpha
\beta,\overline{\alpha}}T_{\overline{\beta}}\right\rangle ,\\
Ric\left(  T,T\right)   &  =\frac{m}{2}-\left\vert A\right\vert ^{2}.
\end{align*}

\end{proposition}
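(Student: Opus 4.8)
The plan is to derive all three identities by tracing the curvature comparison of Proposition \ref{cuv} over a $g_{\theta}$-orthonormal frame adapted to the CR structure. First I would fix a real orthonormal frame $\{Y_{1},\dots,Y_{2m},T\}$ for $g_{\theta}$ in which $Y_{1},\dots,Y_{2m}$ span the horizontal bundle $H=\ker\theta$ and are assembled from the unitary frame $\{T_{\alpha}\}$ (normalized so that $\langle T_{\alpha},T_{\overline{\beta}}\rangle=\delta_{\alpha\beta}$ under Hermitian extension). This normalization is precisely what produces the factor $2$ in $2R_{\alpha\overline{\beta}}c_{\alpha}\overline{c_{\beta}}$ when a real trace over $H$ is rewritten as a complex contraction. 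With the convention $Ric(V,W)=\sum_{i}\widehat{R}(e_{i},V,e_{i},W)$, each Ricci trace splits as a sum over the $2m$ horizontal directions plus the single term with $e_{i}=T$, the latter surviving only when one of the Ricci arguments is itself $T$.

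I would dispose of the two ``pure'' formulas first. For $Ric(T,T)=\sum_{j}\widehat{R}(Y_{j},T,Y_{j},T)$ I would substitute the second identity of Proposition \ref{cuv} with $X=Y=Y_{j}$ and sum: the $\tfrac14\langle Y_{j},Y_{j}\rangle$ terms give $\tfrac14\cdot 2m=\tfrac{m}{2}$; the terms $-|AY_{j}|^{2}$ sum to $-|A|^{2}$; the sum $\sum_{j}\langle AY_{j},JY_{j}\rangle=-\operatorname{tr}(JA)$ vanishes because $A$ is symmetric and $J$ skew-adjoint; and $\sum_{j}\langle\nabla_{T}AY_{j},Y_{j}\rangle=\operatorname{tr}_{H}(\nabla_{T}A)=0$ since $A$ is trace-free and $\nabla_{T}$ commutes with the horizontal trace. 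For $Ric(X,T)=\sum_{j}\widehat{R}(Y_{j},X,Y_{j},T)$ the third identity applies directly, giving $\widehat{R}(Y_{j},X,Y_{j},T)=\langle\nabla_{Y_{j}}AX,Y_{j}\rangle-\langle\nabla_{X}AY_{j},Y_{j}\rangle$. The second sum vanishes (trace-freeness of $A$ together with the antisymmetry of the metric connection coefficients paired against the symmetric $A$), leaving the horizontal divergence $\sum_{j}\langle\nabla_{Y_{j}}AX,Y_{j}\rangle=\operatorname{div}_{H}(AX)$, which converts to the complex divergence $A_{\alpha\beta,\overline{\alpha}}$ and its conjugate to yield $2\langle X,\operatorname{Re}A_{\alpha\beta,\overline{\alpha}}T_{\overline{\beta}}\rangle$.

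For $Ric(X,X)$ I would write $Ric(X,X)=\sum_{j}\widehat{R}(X,Y_{j},X,Y_{j})+\widehat{R}(X,T,X,T)$ and insert the first and second identities. The scalar bookkeeping is immediate: $-\tfrac34\sum_{j}\langle JX,Y_{j}\rangle^{2}=-\tfrac34|X|^{2}$ (since $J$ is an isometry of $H$) combines with the $+\tfrac14|X|^{2}$ from the $T$-term to give $-\tfrac12|X|^{2}$; the horizontal torsion term $\sum_{j}\langle AX,Y_{j}\rangle^{2}=|AX|^{2}$ cancels against $-|AX|^{2}$ from the $T$-term, while $\langle AX,X\rangle\sum_{j}\langle AY_{j},Y_{j}\rangle=0$, leaving exactly $-\langle\nabla_{T}AX,X\rangle+\langle AX,JX\rangle$. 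The one genuinely delicate step is the horizontal contraction of the Tanaka--Webster curvature itself, $\sum_{j}R(X,Y_{j},X,Y_{j})$: this is not simply the Hermitian Webster Ricci, because the curvature two-form of the Tanaka--Webster connection carries a torsion piece of the form $\sqrt{-1}(\theta_{\beta}\wedge\tau^{\alpha}-\tau_{\beta}\wedge\theta^{\alpha})$ whose horizontal components are pure torsion with no derivatives. Decomposing the real tensor $R(X,Y_{j},X,Y_{j})$ into its complex components $R_{\mu\overline{\nu}\rho\overline{\sigma}}$ and these torsion components, and then summing over the frame, is where the coefficient $m-1$ appears and produces the term $\sqrt{-1}(m-1)(A_{\alpha\beta}c_{\alpha}c_{\beta}-A_{\overline{\alpha}\overline{\beta}}\overline{c_{\alpha}}\,\overline{c_{\beta}})$ alongside $2R_{\alpha\overline{\beta}}c_{\alpha}\overline{c_{\beta}}$. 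I expect this contraction---tracking exactly which torsion terms survive and why precisely $m-1$ (rather than $m$) copies remain after the diagonal direction is accounted for---to be the main obstacle; everything else is trace algebra driven by the trace-freeness of $A$ and the symmetries of $J$ and $\widehat{R}$.
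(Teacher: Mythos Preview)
Your proposal is correct and follows exactly the route the paper indicates: the paper's entire argument is the single sentence ``Taking trace using Proposition \ref{cuv} yields the Ricci curvature $Ric$ of $g_{\theta}$ in terms of the pseudohermitian Ricci tensor $R_{\alpha\overline{\beta}}$ and the torsion $A$,'' and you have spelled out precisely that trace computation, including correctly identifying that the $(m-1)$ torsion term arises from the horizontal contraction of the Tanaka--Webster curvature (via its torsion components) rather than from the $\widehat{R}$-versus-$R$ comparison terms.
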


\bigskip

\end{document}